\providecommand{\U}[1]{\protect\rule{.1in}{.1in}}
\theoremstyle{plain}
\newtheorem{cor}{Corollary}[section]
\newtheorem{lemma}{Lemma}[section]
\newtheorem{prop}{Proposition}[section]
\newtheorem{remark}{Remark}
\newtheorem{theorem}{Theorem}
\numberwithin{equation}{section}
\newcommand{\disp}{\displaystyle}
\DeclareMathOperator{\osc}{osc}
\DeclareMathOperator{\di}{div}
\newcommand{\eps}{\varepsilon}
\newcommand{\al}{\alpha}
\newcommand{\be}{\beta}
\newcommand{\ga}{\gamma}
\newcommand{\de}{\delta}
\newcommand{\la}{\lambda}
\newcommand{\om}{\omega}
\newcommand{\Om}{\Omega}
\newcommand{\si}{\sigma}
\newcommand{\iny}{\infty}
\newcommand{\del}{ \partial}
\newcommand{\su}{\subset}
\newcommand{\LP}{\Delta}
\newcommand{\gr}{\nabla}
\newcommand{\norm}[1]{\left\| #1\right\|}
\newcommand{\innp}[1]{\left< #1 \right>}
\newcommand{\abs}[1]{\left\vert#1\right\vert}
\newcommand{\set}[1]{\left\{#1\right\}}
\newcommand{\brac}[1]{\left[#1\right]}
\newcommand{\pr}[1]{\left( #1 \right) }
\newcommand{\N}{\ensuremath{\mathbb{N}}}
\newcommand{\R}{\ensuremath{\mathbb{R}}}
\newcommand{\C}{\ensuremath{\mathbb{C}}}
\title[Frequency function approach]{A frequency function approach to quantitative unique continuation for elliptic equations}
\author{Blair Davey}
\address[B. Davey]{Department of Mathematical Sciences, Montana State University, Bozeman, MT 59717}
\email{\textcolor{blue}{\href{mailto:}{blairdavey@montana.edu}}}
\thanks{B. D. is supported in part by the NSF CAREER DMS-2236491.}
\subjclass[2010]{35B60, 35J10}
\keywords{Landis conjecture, unique continuation, Schr\"odinger equation, frequency functions}
\date{}
\begin{document}

\begin{abstract}
We investigate the quantitative unique continuation properties of solutions to second-order elliptic equations with lower-order terms.
In particular, we establish quantitative forms of the strong unique continuation property for solutions to generalized Schr\"odinger equations of the form $- \di\pr{A \gr u} + W \cdot \gr u + V u = 0$, where we assume that $A$ is bounded, elliptic, symmetric, and Lipschitz continuous, while $W$ belongs to $L^\iny$ and $V$ belongs to $L^p$.
We also study the global unique continuation properties of solutions to these equations, establishing results that are related to Landis' conjecture concerning the optimal rate of decay at infinity.
Versions of the theorems in this article have been previously proved using Carleman estimates, but here we present novel proof techniques that rely on frequency functions.
\end{abstract}

\maketitle

\section{Introduction}

A partial differential operator $L$ is said to have the {\em unique continuation property} (UCP) if whenever $u$ is a solution to $L u = 0$ in some domain $\Om$ and there exists an open set $U \su \Om$ for which $u \equiv 0$ in $U$, then it necessarily follows that $u \equiv 0$ in $\Om$.
We say that $L$ has the {\em strong unique continuation property} (SUCP) if whenever $u$ is a solution to $L u = 0$ in $\Om$ and there exists a point $x_0 \in \Om$ at which $u$ vanishes to infinite order, then $u \equiv 0$ in $\Om$.
The study of (strong) unique continuation for operators of the form {$- \di\pr{A \gr } + W \cdot \gr + V$} has a long history, see for example \cite{Car39}, \cite{Aro57}, \cite{AKS62}, \cite{SS80}, \cite{ABG81}, \cite{JK85}{, and \cite{KT01}.
The results of \cite{AKS62} show that the SUCP holds for differential inequalities associated to operators with $A \in C^{0,1}$ and bounded lower-order terms.}
Jerison and Kenig prove in \cite{JK85} that the SUCP holds for $-\LP + V$ in $\R^n$ for $n \ge 3$ if $V \in L^{n/2}$.
The results of \cite{KT01} show that the SUCP holds whenever the leading coefficients are Lipschitz continuous and the lower order terms satisfy very general conditions.
{For example, \cite{KT01} establishes the SUCP when $\abs{x} \abs{\gr A} \in \ell^1(L^\iny)$, $V \in c_0(L^{\frac n 2})$, and $W \in \ell^1(L^n)$.}
For $n \ge 3$, the counterexamples due to Pli\'{s} \cite{Pli63} show that we must assume that $A$ is Lipschitz in order for a variable-coefficient operator to have the SUCP.
All of these results were proved using Carleman estimates.

In this article, we focus on generalized Schr\"odinger operators of the form 
\begin{equation}
\label{ellipticOp}
L := - \di\pr{A \gr} + W \cdot \gr + V
\end{equation}
in $\Om \su \R^n$, where $n \ge 2$.
For the coefficients, we assume that $A$ is bounded, elliptic, symmetric, and Lipschitz continuous, and we assume that $W$ belongs to $L^\iny$ and $V$ belongs to $L^p$ for some $p \in [n, \iny]$.
By the results of {\cite{KT01}}, the SUCP holds in this setting.

From a local perspective, we establish bounds for the {\em order of vanishing} of solutions to $L u = 0$.
That is, if $u$ is a solution to $L u = 0$, we seek a lower bound of the form
$$\norm{u}_{L^\iny\pr{B_r}} \gtrsim r^{\be} \;\; \text{ as } \;\; r \to 0,$$
where $\be$ is some function that encapsulates information about the operator, $L$.
{To make the above estimate meaningful, we additionally assume that $u$ is normalized.}
The order of vanishing results may be interpreted as a quantification of the strong unique continuation property.
Theorems \ref{OofV1} and \ref{OofV} below describe such quantifications.

We also study the global unique continuation properties of solutions to $Lu = 0$ in $\R^n$ by quantifying the {\em rate of decay at infinity}.
Such estimates can be interpreted as quantitative Landis-type theorems.
In the late 1960s, E.~M.~Landis \cite{KL88} made the following qualitative unique continuation conjecture: If $u$ is a bounded solution to $-\LP u + V u = 0$ in $\R^n$, where $V$ is a bounded function and $u$ satisfies $\abs{u(x)} \lesssim \exp\pr{- c \abs{x}^{1+}}$, then $u \equiv 0$.

Meshkov \cite{M92} disproved Landis' conjecture by constructing non-trivial $\C$-valued functions $u$ and $V$ that solve $-\LP u + V u = 0$ in $\R^2$, where $V$ is bounded and $\abs{u(x)} \lesssim \exp\pr{- c \abs{x}^{4/3}}$. 
Meshkov also used Carleman estimates to prove a \textit{qualitative unique continuation} result: 
If $-\LP u + V u = 0$ in $\R^n$, where $V$ is bounded and $u$ satisfies a decay estimate of the form $\abs{u(x)} \lesssim \exp\pr{- c \abs{x}^{4/3+}}$, then necessarily $u \equiv 0$.
In their work on Anderson localization \cite{BK05}, Bourgain and Kenig established a quantitative version of Meshkov's result. 
As a first step in their proof, they used three-ball inequalities derived from Carleman estimates to establish {order of vanishing} estimates for local solutions to Schr\"odinger equations.
{Their results show that the order of vanishing for solutions to $-\LP u + V u = 0$ is less than $C\pr{1 + \norm{V}^{2/3}_{L^{\iny}}}$, \cite[pg. 4]{Ken06}.}
Then, through a scaling argument, they proved a {quantitative unique continuation} result.
More specifically, they showed that if $u$ and $V$ are bounded, and $u$ is normalized so that $\abs{u(0)} \ge 1$, then for sufficiently large values of $R$, $\disp  \inf\set{\norm{u}_{L^\iny\pr{B(x_0, 1)}}: |x_0| = R} \ge \exp{(-CR^{4/3}\log R)}$.

In Theorem \ref{UCatIny} below, we reprove the result from \cite{BK05}.
While the results of \cite{M92} and \cite{BK05} rely on Carleman estimates, we use frequency functions.
Theorem \ref{UCatInyAWV} generalizes Theorem \ref{UCatIny} to more general operators and gives a special case of (a weaker version of) the main theorem in \cite{LW14}.

While Carleman estimates have been used to establish all of the unique continuation results discussed so far, another approach is based on {\em frequency functions}.
Almgren \cite{Alm79,Alm00} first showed that the quantity 
$$ N(r) := r \frac{\int_{B_r} \abs{\gr u}^2}{\int_{\del B_r} \abs{u}^2},$$ 
called a frequency function, is non-decreasing in $r$ if $u$ is a harmonic function.
Moreover, if $u$ is a harmonic polynomial, then $N(r)$ is constant and equal to the degree.
Garofalo and Lin \cite{GL86, GL87} used similar frequency functions to establish unique continuation results for solutions to variable-coefficient elliptic equations.
Kukavica \cite{Kuk98} then built on the ideas of Garofalo and Lin and proved order of vanishing estimates for solutions to $-\LP_M u + V u = 0$.
{When the coefficients are bounded, elliptic, symmetric, and Lipschitz continuous, \cite[Theorem 5.1]{Kuk98} shows that the order of vanishing is less than $C\brac{1 + \pr{\sup V_-}^{1/2} + \pr{\osc V}^2}$ if $V \in L^\iny$ or $C\brac{1 + \pr{\sup V_-}^{1/2} + \pr{\osc V} + \norm{\gr V}_\iny}$ if $V \in W^{1,\iny}$.}
In \cite{Kuk00}, Kukavica introduced modified frequency functions of the form
$$N(r) := \frac{\int_{B_r} \abs{\gr u(x)}^2 \pr{r^2 - \abs{x}^2}^{\al}dx}{\int_{B_r} \abs{u(x)}^2 \pr{r^2 - \abs{x}^2}^{\al -1}dx},$$
where $\al \ge 1$.
Using these modified frequency functions, Zhu \cite{Zhu16} {proved that the order of vanishing for solutions to $-\LP u + V u = 0$ is less than $C\pr{1+ \norm{V}^{1/2}_{W^{1,\iny}}}$ if $V \in W^{1,\iny}$} and extended the ideas to higher-order elliptic operators.
In \cite{BG16}, Banerjee and Garofalo used the modified frequency functions to establish quantitative uniqueness for variable-coefficient elliptic equations at the boundary of Dini domains.
Chen and Liu \cite{CL25} have also used modified frequency functions to study quantitative uniqueness for equations with inverse square potentials.
The estimates of \cite{Zhu16} and \cite{BG16} apply to $V \in W^{1,\iny}$ and are sharp.

When $V \in L^\iny$, the order of vanishing estimate in \cite{Kuk98}, {$C\pr{1 + \pr{\sup V_-}^{1/2} + \pr{\osc V}^2}$, is potentially larger than the one implied by the results in \cite{BK05}, $C \pr{1 + \norm{V}^{2/3}_{L^{\iny}}}$}.
That is, the results established using frequency functions aren't as sharp as those obtained via Carleman estimates.
In this article, we show how the modified frequency functions may be used to obtain the same quantitative unique continuation results that have been previously proved via Carleman estimates.
For example, we use frequency functions to prove the order of vanishing estimate for solutions to $-\LP u + V u = 0$ that appears in \cite{BK05}.
If $V \in W^{1,\iny}$, then the divergence theorem may applied to the terms involving $\LP u = V u$ and a careful analysis then leads to an optimal order of vanishing estimate{, as is shown in \cite{Zhu16}}.
Here, with $V \in L^\iny$, we can't apply the divergence theorem to these terms, so we instead complete the square to derive an almost monotonicity result.

To the best of our knowledge, frequency functions have not been previously used to establish the elliptic theorems of this paper.
However, we point out that frequency functions were used by Camliyurt and Kukavica in \cite{CK18} to study quantitative uniqueness of solutions to parabolic equations.
Moreover, their estimates match those of the elliptic setting.

By the constructions that appear in \cite{M92}, Theorem \ref{UCatIny} is sharp when we restrict to $\C$-valued equations and solutions.
The additional constructions that appear in \cite{CS97} and \cite{Dav14} imply that Theorem \ref{UCatInyAWV} is also sharp in the $\C$-valued setting when $p = \iny$.
However, the resolution to Landis' conjecture in the real-valued planar setting {\cite{LMNN25}} shows that Theorem \ref{UCatIny} is not sharp in that setting.

{Although we focus here on Euclidean domains, there are some interesting recent developments in periodic domains. 
The examples in \cite{FK24} establish that a version of Theorem \ref{UCatIny} is sharp for $\R$-valued equations on cylindrical domains of the form $\mathbb{T}^d \times \R$, where $d \ge 3$.
That is, the power of $4/3$ is sharp for such cylindrical domains.
}

Given the relationship between local and global unique continuation estimates, we may similarly conclude that Theorem \ref{OofV} is optimal (at least with respect to the dependencies on $M$ and $K$) in the complex setting when $p = \iny$.
When $p \in [n, \iny)$, the results of \cite{DZ18}, \cite{DZ19}, \cite{Dav20b} show Theorem \ref{OofV} is not optimal.
However, the dependency on $M$ in Theorem \ref{OofV} precisely matches the order of vanishing that was established in \cite{KT16} using $L^2-L^2$ Carleman estimates.

We use the notation $B_r(x)$ to denote a ball of radius $r$ centered at the point $x$, abbreviated by $B_r$ when the center is clear.
Generic constants are denoted by $c, C$ and may change from line to line without comment.
Specific constants may be indicated by subscripts.

The article is organized as follows.
In the next section, Section \ref{S:Statements}, we precisely state the four theorems that are proved in this paper.
In Section \ref{S:ClassicS}, we focus on classical Schr\"odinger operators of the form $L := - \LP + V$.
We introduce the associated frequency function, estimate its derivative, establish a monotonicity result for the frequency function, and then use the monotonicity result to prove a three-balls theorem.
Section \ref{S:GeneralS} mimics Section \ref{S:ClassicS}, except we now work with variable coefficients, a bounded first-order term, and a singular zeroth-order term, so the arguments are more complicated.
Finally, in Section \ref{S:Proofs}, we present the proofs of the theorems that are stated in Section \ref{S:Statements}.

\subsection*{Acknowledgements}
I'd like to thank Agnid Banerjee for posing this question to me during a recent visit to Arizona State University and for interesting mathematical discussions.
{Thank you to the referee for their careful reading and helpful remarks.}

\section{Theorem Statements}
\label{S:Statements}

In this section, we state the main theorems of the article.
There are four theorems in total, two order of vanishing estimates and two Landis-type results.
The first pair of theorems applies to the classical Schr\"odinger operators of the form $-\LP + V$, where $V$ is assumed to be bounded.
The second pair of theorems applies to very general Schr\"odinger operators.
Although the second pair of theorems still applies to the classical operators (and hence subsumes the first pair of theorems), we have chosen to include the first two theorems for readability and comparison with the literature.
All of the proofs are presented in Section \ref{S:Proofs}.

Our first result is an order of vanishing estimate for solutions to Schr\"odinger equations with bounded potentials and may be found in \cite{BK05}.

\begin{theorem}[Order of vanishing estimate I]
\label{OofV1}
Let $V \in L^\iny(B_{10})$ satisfy $\norm{V}_{L^\iny\pr{B_{10}}} \le M$.
Assume that $u$ is a solution to 
$$- \LP u + V u= 0 \; \text{ in } \, B_{10}$$
that is bounded in the sense that
\begin{equation}
\label{uBound}
\norm{u}_{L^\iny(B_{10})} \le C_0
\end{equation}
and normalized so that
\begin{equation}
\label{uNorm}
\norm{u}_{L^2(B_{1})} \ge 1.
\end{equation}
There exists a constant $C(n, C_0) {\gtrsim 1 + \ln C_0 + n + \log \abs{B_1}}$ so that for any $r \ll 1$, it holds that
\begin{equation}
\label{OofVResult1}
\norm{u}_{L^2(B_r)} \ge r^{C \pr{M^{\frac {2}{3}} + 1}}.
\end{equation}
\end{theorem}

The next result is a Landis-type theorem for Schr\"odinger equations of the form $-\LP u + V u = 0$.
The original version of this theorem may be found in \cite[Lemma 3.10]{BK05}.

\begin{theorem}[Unique continuation at infinity I]
\label{UCatIny}
Let $V \in L^\iny(\R^n)$ satisfy $\norm{V}_{L^\iny\pr{\R^n}} \le 1$.
Assume that $u$ is a solution to 
$$- \LP u + V u= 0 \; \text{ in } \, \R^n$$
that is bounded in the sense that
\begin{equation}
\label{uBound1}
\abs{u(x)} \le \exp\pr{C_0 \abs{x}^{\frac{4}{3}}}
\end{equation}
and normalized so that
\begin{equation}
\label{uNorm1}
\norm{u}_{L^2(B_{1})} \ge 1.
\end{equation}
There exists a constant $C(n, C_0)  {\gtrsim 1 + C_0 + n + \log \abs{B_1}}$ so that for any $R \gg 1$, if $\abs{x_0} = R$, then
\begin{equation}
\label{UCatInyResult}
\norm{u}_{L^2(B_1(x_0))} \ge \exp\pr{- C R^{\frac{4}{3}} \log R}.
\end{equation}
\end{theorem}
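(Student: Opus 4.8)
The plan is to deduce \eqref{UCatInyResult} from a local, quantitative strong unique continuation estimate for $-\LP u + Vu = 0$ --- an order of vanishing bound whose exponent depends on $\norm{V}_{L^\iny}$ through the \emph{sharp} power $\norm{V}_{L^\iny}^{2/3}$ --- and then to transport the normalization \eqref{uNorm1} out to the scale $\abs{x_0} = R$ by a rescaling argument in the spirit of Bourgain and Kenig \cite{BK05}. The single quantitative input that carries the weight is a three-ball inequality
\[
\norm{w}_{L^2\pr{B_{2r}}} \le e^{C\pr{1 + M^{2/3}}}\, \norm{w}_{L^2\pr{B_{r}}}^{\te}\,\norm{w}_{L^2\pr{B_{4r}}}^{1-\te}
\]
for solutions $w$ of $-\LP w + \tilde V w = 0$ in $B_{4r}$ with $\norm{\tilde V}_{L^\iny} \le M$ and $\te = \te(n) \in (0,1)$; iterating it from unit scale down to scale $\rho$ yields the order of vanishing bound $\norm{w}_{L^2\pr{B_{\rho}}} \ge \rho^{\,C\pr{1 + M^{2/3}}}\norm{w}_{L^2\pr{B_{1/2}}}$ for all small $\rho > 0$.

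I would establish the three-ball inequality via Kukavica's modified frequency function \cite{Kuk00},
\[
N(r) = \frac{\int_{B_r}\abs{w}^2\pr{r^2 - \abs{x}^2}^{\al - 1}\,dx}{\int_{B_r}\abs{\gr w}^2\pr{r^2 - \abs{x}^2}^{\al}\,dx},
\]
or its reciprocal. Differentiating the logarithms of the numerator and denominator and inserting the equation produces, alongside the usual terms, contributions in which $\LP w$ has been replaced by $\tilde V w$; because $\tilde V$ is merely $L^\iny$ these cannot be removed by integration by parts, so instead one completes the square to absorb them into the nonnegative terms, at the cost of an additive perturbation of size $\sim M^{2/3}$ in the resulting almost-monotonicity of $N$. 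Exponentiating the differential inequality for $N$ and passing from spherical to solid-ball norms then gives the displayed estimate. I expect this completing-the-square step --- arranging the absorption so that the leftover is of the optimal size $M^{2/3}$ --- to be the main obstacle; everything downstream is bookkeeping with scales.

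Granting the resulting order of vanishing estimate (the quantitative SUCP for $-\LP + V$, cf. Theorem \ref{OofV}), fix $x_0$ with $\abs{x_0} = R \gg 1$ and set $w(y) = u(Ry)$, so that $-\LP w + \tilde V w = 0$ in $B_4$ with $\tilde V(y) = R^2 V(Ry)$; hence $M := \norm{\tilde V}_{L^\iny\pr{B_4}} \le R^2$ and $M^{2/3} = R^{4/3}$. The a priori bound \eqref{uBound1} rescales to $\norm{w}_{L^\iny\pr{B_4}} \le \exp\pr{C_0\pr{4R}^{4/3}}$, while \eqref{uNorm1} rescales to $\norm{w}_{L^2\pr{B_{1/R}}} \ge R^{-n/2}$, hence $\norm{w}_{L^\iny\pr{B_1}} \ge \norm{w}_{L^\iny\pr{B_{1/R}}} \ge c$ by a volume estimate. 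Writing $y_0 = x_0/R$ (so $\abs{y_0} = 1$ and $B_{1/2}\pr{y_0} \su B_4$), a chain of $O(1)$ applications of the three-ball inequality inside $B_4$ propagates this lower bound from $B_1\pr{0}$ to $B_{1/2}\pr{y_0}$; since each link costs the factor $e^{C\pr{1+M^{2/3}}} = e^{CR^{4/3}}$ and the outer norm $\norm{w}_{L^\iny\pr{B_4}} \le e^{CR^{4/3}}$ enters only to a bounded power, this produces $\norm{w}_{L^\iny\pr{B_{1/2}\pr{y_0}}} \ge \exp\pr{-CR^{4/3}}$.

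Finally, I would apply the order of vanishing estimate at $y_0$ between scales $1/2$ and $1/\pr{2R}$: this contributes the factor $\pr{1/(2R)}^{C\pr{1 + M^{2/3}}} = \exp\pr{-CR^{4/3}\log R}$, so $\norm{w}_{L^\iny\pr{B_{1/(2R)}\pr{y_0}}} \ge \exp\pr{-CR^{4/3}\log R}$. Converting this into an $L^2$ lower bound on the larger ball $B_{1/R}\pr{y_0}$ through an interior estimate (the lower-order correction is $O(1)$ at this scale, as $R^{-2}M = O(1)$) and undoing the rescaling, $\norm{u}_{L^2\pr{B_1\pr{x_0}}} = R^{n/2}\norm{w}_{L^2\pr{B_{1/R}\pr{y_0}}}$, costs only powers of $R$ and yields $\norm{u}_{L^2\pr{B_1\pr{x_0}}} \ge \exp\pr{-CR^{4/3}\log R}$ for $R \gg 1$, which is \eqref{UCatInyResult}. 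The dependence of the constant on $n$ and $C_0$ is carried through $\te(n)$, the interior estimates, and \eqref{uBound1}.
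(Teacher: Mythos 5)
Your proof is correct in outline, and it is built on the same fundamental input as the paper (a frequency-function three-ball inequality for $-\LP + V$ whose constant is governed by $\norm{V}_{L^\iny}^{2/3}$, obtained by completing the square in the almost-monotonicity computation rather than integrating by parts). However, the way you close the global argument differs from the paper. You follow the Bourgain--Kenig template: rescale $w(y) = u(Ry)$ so that the equation lives at unit scale with $M = \norm{\tilde V}_{L^\iny} = R^2$, propagate a lower bound for $w$ from $B_1(0)$ to $B_{1/2}(y_0)$ by a chain of $O(1)$ fixed-ratio three-ball inequalities, and then use a separate order-of-vanishing estimate (in effect Theorem \ref{OofV}) to drill from scale $1/2$ down to scale $1/(2R)$ at $y_0$; the $\log R$ in the final bound enters through the exponent $\pr{1/(2R)}^{CM^{2/3}}$. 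The paper instead works entirely at the original scale: it translates $u$ so that $x_0$ is at the origin and applies Proposition \ref{threeBallsV} \emph{once}, with the widely separated radii $r_1 = 1$, $r_2 = R+1$, $r_3 = 3R$; because that proposition holds for arbitrary $0 < r_1 < r_2 < 2r_2 < r_3$, no chaining or separate order-of-vanishing step is needed, and the $\log R$ emerges from the interpolation exponent $\kappa \sim 1/\log R$ when the radii are this disparate. Both routes are sound and yield the same exponent, but the paper's version is leaner --- one application of a flexible three-ball inequality replaces your rescaling, chain, order-of-vanishing, and the final $L^\iny$-to-$L^2$ conversion (which, incidentally, you should justify with an interior estimate that loses only a polynomial factor in $R$, as you note). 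Your approach, on the other hand, makes the logical structure more modular: it shows explicitly that a local order-of-vanishing result with the sharp $M^{2/3}$ dependence is the only quantitative input needed, which is precisely the role Theorem \ref{OofV} plays in the Bourgain--Kenig paradigm.
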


The next result is a local order of vanishing estimate for solutions to generalized Schr\"odinger equations {for operators of the form \eqref{ellipticOp}}.
Versions of the following result appear throughout the literature; see, for example, \cite[Theorem 1]{DZ18}, \cite[Theorem 1]{DZ19} (when $A = I$, $\la = 1$, $\eta = 0$, $p = \iny$) and \cite{KT16} (when $A = I$, $\la = 1$, $\eta = 0$, $W \equiv 0$){, both obtained using Carleman estimate techniques.
For the variable-coefficient setting, \cite[Theorem 5.1]{Kuk98} uses frequency functions to prove an order of vanishing estimate when $V \in L^\iny$ or $V \in W^{1,\iny}$ (and $W \equiv 0$), but with different dependencies on the norm of $V$ and implicit dependencies on $A$.
In the constant-coefficient setting, when $p \in [n, \iny)$, a sharper order of vanishing estimate is available in  \cite[Theorem 1]{DZ18}, \cite[Theorem 1]{DZ19}. 
The author is not aware of an order of vanishing result that handles variable-coefficients, non-zero first-order terms, and singular zeroth-order terms simultaneously.
In fact, the following result} with the explicit dependence on the Lipschitz constant $\eta$ could be new. 

\begin{theorem}[Order of vanishing estimate II]
\label{OofV}
Let $A : \R^n \to \mathbb{M}_n$ be a symmetric, bounded, uniformly elliptic matrix-valued function with Lipschitz continuous coefficients.
That is, with $A = (a_{ij})_{i,j = 1}^n$, there exists $\la, \eta > 0$ so that 
\begin{align}
&a_{ij}(x) = a_{ji}(x) \qquad \qquad\quad\,\,\, \text{ for every } x \in \R^n, i, j = 1, \ldots, n 
\label{symm} \\
&a_{ij}(x) \xi_i \zeta_j \le \la^{-1} \abs{\xi} \abs{\zeta} \,\qquad  \text{ for every } x \in \R^n, \xi, \zeta \in \R^n 
\label{Abound} \\
&a_{ij}(x) \xi_i \xi_j \ge \la \abs{\xi}^2 \qquad\qquad \text{ for every } x \in \R^n, \xi \in \R^n 
\label{ellip} \\
&\abs{\gr a_{ij}(x)} \le \eta \,\,\,\quad\quad\quad\quad\quad\,  \text{ for a.e. } x \in \R^n .
\label{LipCond}
\end{align}
Let $u$ be a solution to 
$$- \di\pr{A \gr u} + W \cdot \gr u + V u= 0 \; \text{ in } \, B_{10},$$
where $\norm{W}_{L^\iny(B_{10})} \le K$ and $\norm{V}_{L^p\pr{B_{10}}} \le M$ for some $p \in \brac{n, \iny}$.
Assume that $u$ is bounded in the sense of \eqref{uBound} and normalized as in \eqref{uNorm}.
If $A(0) = I$, then there exist constants $c(n, \la), C(n, \la, p, C_0), r_0(n, \la, \eta, p) > 0$ so that for any $r \le r_0$, it holds that
\begin{equation}
\label{OofVResult}
\norm{u}_{L^2(B_r)} \ge r^{C \brac{K^2 + M^{\frac{2p}{3p - 2n}} + \eta + 1} e^{c \eta}}
\end{equation}
\end{theorem}

\begin{remark}
The assumption that $A(0) = I$ is not completely necessary since we could change variables and allow the constants to depend further on $\la$.
However, for convenience, we add this assumption to the theorem.
\end{remark}

Finally, we also establish unique continuation at infinity results for solutions to generalized Schr\"odinger equations.
{When $p = \iny$,} the next result is a special case of (a weaker version of) \cite[Theorem 1.1]{LW14}.
{Our theorem could be viewed as complementary to \cite[Theorem 1.1]{LW14} since here we assume that $V$ is singular, whereas in \cite{LW14}, $V$ is assumed to have pointwise decay at infinity.
As in the local setting, for the constant-coefficient setting, versions of the following result appear in \cite[Theorem 2]{DZ18} and \cite[Theorem 2]{DZ19}  for $p = \iny$.
When $A = I$ and $W = 0$, sharper bounds are available in  \cite[Theorem 4]{DZ18}, \cite[Theorem 4]{DZ19}.
The novelty of the following result is that it applies to operators with variable-coefficients, non-zero first-order terms, and singular zeroth-order terms.}
The decay assumption on $\gr A$ described by \eqref{LipCond2} has appeared previously in \cite{Tu10} and \cite{LW14}.

\begin{theorem}[Unique continuation at infinity II]
\label{UCatInyAWV}
Let $A : \R^n \to \mathbb{M}_n$ be a symmetric, bounded, uniformly elliptic matrix-valued function that satisfies \eqref{symm}, \eqref{Abound}, and \eqref{ellip}.
Assume further that $A = (a_{ij})_{i,j=1}^n$ has Lipschitz continuous coefficients and for some $\eps > 0, \eta \ge 0$, 
\begin{align}
&\abs{\gr a_{ij}(x)}\le \eta \innp{x}^{- (1 + \eps)} \, \text{ for a.e. } x \in \R^n .
\label{LipCond2}
\end{align}
Let $u$ be a solution to 
$$- \di\pr{A \gr u} + W \cdot \gr u + V u= 0 \; \text{ in } \, \R^n,$$
where $\norm{W}_{L^\iny(B_{10})} \le K$ and $\norm{V}_{L^p\pr{B_{10}}} \le M$ for some $p \in \brac{n, \iny}$.
Assume that $u$ is bounded in the sense that
\begin{equation}
\label{uBound2}
\abs{u(x)} \le \exp\pr{C_0 \abs{x}^{2}}
\end{equation}
and normalized so that
\begin{equation}
\label{uNorm2}
\norm{u}_{L^2(B_{1})} \ge 1.
\end{equation}
For any $\de \in (0, \eps)$, there exist constants $C(n, \la, \eta, \eps, p, C_0, \de), R_0(n, \la, \eta, \eps, p, C_0, \de) > 0$ so that if $\abs{x_0} \ge R_0$, then
\begin{equation}
\label{UCatInyResultII}
\norm{u}_{L^2(B_1(x_0))} \ge \exp\pr{- C \abs{x_0}^{2\pr{1+\de}}}.
\end{equation}
\end{theorem}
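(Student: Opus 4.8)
The plan is to run the scaling argument behind Theorem \ref{UCatIny}, fed by the three-balls inequality that Section \ref{S:GeneralS} extracts from the (almost-)monotonicity of a modified frequency function for the operator \eqref{ellipticOp}. Fix $x_0$ with $R:=|x_0|$ large and set $u_R(x):=u(Rx)$, which solves $-\di(A_R\gr u_R)+W_R\cdot\gr u_R+V_R u_R=0$ with $A_R(x)=A(Rx)$, $W_R(x)=RW(Rx)$, $V_R(x)=R^2V(Rx)$, so that $\norm{V_R}_{L^\iny}\le R^2$, $\norm{W_R}_{L^\iny}\le R$, and $|\gr (A_R)_{ij}(x)|=R|\gr a_{ij}(Rx)|\le R\eta\innp{Rx}^{-(1+\eps)}$; the rescaled ellipticity constant is still $\la$. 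Moreover $\norm{u_R}_{L^2(B_{1/R})}=R^{-n/2}\norm{u}_{L^2(B_1)}\ge R^{-n/2}$ by \eqref{uNorm2}, while $\norm{u_R}_{L^\iny(B_6)}\le\exp(CR^2)$ by \eqref{uBound2}. The role of \eqref{LipCond2} is that, although $|\gr A_R|$ can be as large as $R\eta$ near the origin, $\int_{B_2}|\gr (A_R)_{ij}|\,dx\le R\eta\int_{B_2}\innp{Rx}^{-(1+\eps)}dx=R^{1-n}\eta\int_{B_{2R}}\innp{y}^{-(1+\eps)}dy\lesssim\eta R^{-\eps}$, which is the quantity in which $\gr A_R$ actually appears in the frequency-function identities (those error terms are integrals of $|\gr a_{ij}|$ against bounded weights, produced by integration by parts, not suprema). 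Thus, after rescaling, the leading coefficient behaves like a Lipschitz matrix whose Lipschitz ``mass'' is $\lesssim\eta R^{-\eps}\to0$.

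With this in hand the argument is the standard three-step one. (i) Trivially $\norm{u_R}_{L^2(B_{1/2})}\ge\norm{u_R}_{L^2(B_{1/R})}\ge R^{-n/2}$. (ii) Join $B_{1/2}$ to $B_{1/2}(x_0/R)$ by a chain of $O(1)$ unit-scale balls and apply the three-balls inequality of Section \ref{S:GeneralS} at each link (after the linear change of variables of the Remark following Theorem \ref{OofV} to normalize $A_R$ at the relevant center); each link contributes a multiplicative constant $\exp(CR^2)$ — from $\norm{V_R}_{L^\iny}\le R^2$ and $\norm{W_R}_{L^\iny}^2\le R^2$, with the $|\gr A_R|$ contribution negligible by the previous paragraph — and a fixed interpolation exponent, so propagating the bound from (i) gives $\norm{u_R}_{L^2(B_{1/2}(x_0/R))}\ge\exp(-CR^2)$, that is, $\norm{u}_{L^2(B_{R/2}(x_0))}\ge\exp(-CR^2)$. (iii) Zoom in at $x_0$: since $B_{R/2}(x_0)$ lies at distance $\ge R/2$ from the origin, rescaling it to $B_{10}$ gives coefficients with $\tilde M\lesssim R^2$, $\tilde K\lesssim R$, $\tilde\eta\lesssim\eta R^{-\eps}$, so Theorem \ref{OofV} (tracking that the ratio $\norm{u}_{L^\iny(B_{5R}(x_0))}/\norm{u}_{L^2(B_{R/2}(x_0))}\le\exp(CR^2)$ enters the order-of-vanishing exponent additively) yields $\norm{u}_{L^2(B_1(x_0))}\ge\exp(-CR^2\log R)$. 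Since $\log R\le R^{\de}$ once $R\ge R_0(n,\la,\eta,\eps,C_0,\de)$, this is at least $\exp(-C|x_0|^{2(1+\de)})$, which is \eqref{UCatInyResultII}.

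The one place that is not purely routine is step (ii): rescaling by $R$ creates a region of diameter $O(R^{-1})$ about the origin on which $A_R$ is only Lipschitz with constant $O(R\eta)$, and the chain there necessarily meets it. This is exactly why one needs the frequency-function identities in the form where $\gr A$ appears only through an integral, and why \eqref{LipCond2} — with the \emph{strict} decay rate $1+\eps>1$ — is used: it is what makes $\int_{B_2}|\gr A_R|$ (and, more generally, $\gr A_R$ tested against the frequency weights across the dyadic scales $[R^{-1},1]$) summable to $O(\eta R^{-\eps})$, and it is the source of the restriction $\de<\eps$. All the remaining ingredients — the modified frequency function, its almost-monotonicity obtained by completing the square in the lower-order terms, and the resulting fixed-ratio three-balls inequality — are supplied by Section \ref{S:GeneralS} and used here exactly as in the proofs of Theorems \ref{UCatIny} and \ref{OofV}.
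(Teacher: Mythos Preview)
Your step (ii) contains a genuine gap, and the claim you flag as ``the one place that is not purely routine'' is exactly where the argument breaks. The assertion that in the frequency-function identities ``$\gr A$ appears only through an integral of $|\gr a_{ij}|$ against bounded weights'' is incorrect. Looking at the error terms $E_H(r)$ in \eqref{EHdefn} and $E_D(r)$ in \eqref{EDdefn}, the derivatives $\del_k a_{ij}$ (and the differences $a_{ij}(x)-a_{ij}(0)$, via Lemma~\ref{AestLemma}) are integrated against $|u|^2\om_r^{\al-1}$ and $|\gr u|^2\om_r^{\al}$, not against bounded weights. To compare these integrals to $H(r)$ and $D(r)$ one must pull out the \emph{pointwise} supremum of $|\gr A(x)|\,|x-z|$ over the ball (with $z$ the center); the quantity $\int_{B_2}|\gr A_R|$ that you compute is simply not what controls $E_D/D$ or $E_H/H$. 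In your rescaled picture, any unit-scale ball in the chain that is centered at $z$ with $|z|\sim 1$ but contains the origin has $\sup_{x}|\gr A_R(x)|\,|x-z|\sim R\eta$, so the three-balls inequality of Proposition~\ref{threeBallsAWV} applied at that link carries $e^{c_0\eta_{\rm eff}R_{\rm outer}}\sim e^{cR\eta}$ both in the prefactor and, crucially, inside $\kappa$. Propagating then yields at best $\norm{u}_{L^2(B_1(x_0))}\ge\exp(-e^{c\eta R}R^2)$, not $\exp(-CR^{2(1+\de)})$.

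The paper's proof is built precisely to avoid this obstruction, and this is why it is iterative rather than single-scale. One fixes a base radius $\bar R_1$ (depending only on $n,\la,\eta,\eps,C_0,\de$) and propagates outward through intermediate points $x_1,\dots,x_m=x_0$ with $|x_k|=R_k=R_{k-1}^{1+\de}$. The very first step, from the origin out to $|x_1|=R_1$, does use a ball that meets the origin and therefore pays the full cost $e^{\tilde c_1 R_1}$ --- but $R_1\le\bar R_1^{1+\de}$ is a \emph{fixed} constant, so this is harmless. Every subsequent step works entirely in the region $\{|x|\ge R_{k-1}/2\}$, where \eqref{LipCond2} gives an effective Lipschitz constant $\eta_{\rm eff}\lesssim\eta R_{k-1}^{-(1+\eps)}$; since the ball used at step $k$ has radius $\lesssim R_k=R_{k-1}^{1+\de}$, the product $\eta_{\rm eff}\cdot R_{\rm ball}\lesssim\eta R_{k-1}^{\de-\eps}$ stays bounded (indeed, tends to zero), and this is the true source of the restriction $\de<\eps$. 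Your single global rescaling forces one application of the three-balls inequality to bridge the origin and $x_0$ simultaneously, and no amount of integration can prevent the exponential loss that this entails.
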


\begin{remark}
If $W \equiv 0$, then the power of $2$ in the above result may be replaced with $\frac {4p-2n}{3p-2n} \le 2$.
\end{remark}

\begin{remark}
As shown in \cite{LW14}, the result above may be sharpened in the sense that the term $\abs{x_0}^{2\de}$ may be replaced with a function of $\log \abs{x_0}$.
However, to more clearly illustrate our proof techniques, we choose to prove the above (non-sharp) version of this result here.
\end{remark}

All four of these theorems follow from three-balls inequalities that are proved using frequency functions.
Theorems \ref{OofV1} and \ref{UCatIny} are proved with the three-ball inequalities described by Proposition \ref{threeBallsV}, which are established using the frequency functions introduced in Lemma \ref{monoLemma}.
Similarly, Theorems \ref{OofV} and \ref{UCatInyAWV} are proved using the three-ball inequalities described by Proposition \ref{threeBallsA} which rely on the frequency functions introduced in Lemma \ref{monoLemmaA}.
The presence of variable coefficients makes the analysis in the proof of Lemma \ref{monoLemmaA} more delicate, while the presence of (singular) lower-order terms requires some optimization of inequalities, as shown in Lemma \ref{someBoundsLemma}.
All of the proofs of the theorems stated in this section are presented in Section \ref{S:Proofs}.

The proof of Theorem \ref{UCatInyAWV} is similar to that of Theorem \ref{UCatIny}; however, it requires an iterative argument to handle the variable coefficients.
{Roughly speaking, the variability of the coefficient matrix makes the estimates for the rate of decay worse than they would be in the constant-coefficient case.
However, since $\abs{\gr A}$ is assumed to decay at infinity, then each application of our argument leads to an improved bound.
Therefore, by iterating the general proof scheme, we can get arbitrarily close to the optimal bound.}
These details appear at the end of Section \ref{S:Proofs}.

Although {some of} these theorems are not new, to the best of our knowledge, they have not been previously proved using frequency function techniques.
Therefore, the approach here may be new.
Moreover, as can be seen in Corollary \ref{monoCorA}, for example, the frequency function approach shows explicitly how and where each term from the generalized Schr\"odinger operator contributes to our monotone functions. 
This insight alone may be considered interesting and worthwhile.

\section{Classical Schr\"odinger operators}
\label{S:ClassicS}

To illustrate the main ideas, we first consider Schr\"odinger equations of the form $- \LP u + V u = 0$, where $V$ is bounded.
First, we introduce the weight function that appears {in} our frequency functions.
Then we define the frequency function and find a formula for its derivative.
Next, we show that an associated function is monotone non-decreasing whenever $u$ is a solution.
The monotonicity result is then used to establish a three-balls inequality.

We begin by introducing the weight function.
For each $r > 0$, let
\begin{equation}
\label{weightF}
\om_r(x) = r^2 - \abs{x}^2
\end{equation}
Notice that $\om_r(x)$ vanishes along $\del B_r$.
Therefore, if we use this weight function to define our frequency function, we can eliminate all of the boundary integrals that appear when we integrate by parts.
The following lemma will be used it in our computations below.

\begin{lemma}[Weight function derivatives]
\label{derivLem}
Let $\om_r$ be as in \eqref{weightF}.
If $\disp F(r) := \int_{B_r} f(x) \, \om_r(x)^\al dx$ for some $\al \ge 1$, then
\begin{align*}
F'(r) 
&= \frac{2\al + n}{r} F(r) + \frac 1 r \int_{B_r} \gr f(x) \cdot x \, \om_r(x)^{\al} dx \\
&= \frac{2\al + n}{r} F(r) + \frac 1 {2 (\al + 1)r} \int_{B_r} \LP f(x)  \om_r(x)^{\al+1} dx.
\end{align*}
\end{lemma}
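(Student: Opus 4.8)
The plan is to avoid differentiating directly under an integral over the moving domain $B_r$ by first rescaling to the fixed domain $B_1$, and then to deduce the second identity from the first through an integration by parts that crucially uses that $\om_r$ vanishes on $\del B_r$. So first I would substitute $x = ry$: since $dx = r^n\,dy$ and $\om_r(ry) = r^2\pr{1 - \abs{y}^2}$, this gives $F(r) = r^{2\al + n}\int_{B_1} f(ry)\pr{1 - \abs{y}^2}^\al\,dy$, an integral over a domain independent of $r$. Differentiating in $r$ (the interchange being justified by the differentiation-under-the-integral theorem when $f \in C^1$, since the integrand and its $r$-derivative are continuous on a compact set) produces two terms. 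Differentiating the prefactor $r^{2\al+n}$ and using $\int_{B_1} f(ry)\pr{1-\abs{y}^2}^\al\,dy = r^{-(2\al+n)}F(r)$ yields $\frac{2\al+n}{r}F(r)$; differentiating $f(ry)$ by the chain rule yields $r^{2\al+n}\int_{B_1}\gr f(ry)\cdot y\,\pr{1-\abs{y}^2}^\al\,dy$, and undoing the substitution here — via $\gr f(ry)\cdot y = \tfrac1r\,\gr f(x)\cdot x$, $\pr{1-\abs{y}^2}^\al = r^{-2\al}\om_r(x)^\al$, and $dy = r^{-n}\,dx$ — turns this into $\tfrac1r\int_{B_r}\gr f(x)\cdot x\,\om_r(x)^\al\,dx$. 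Summing the two contributions gives the first formula.

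For the second formula I would rewrite the vector field $x\,\om_r^\al$ as a gradient. Since $\gr\pr{\om_r^{\al+1}} = (\al+1)\om_r^\al\,\gr\om_r = -2(\al+1)\,x\,\om_r^\al$, we have $x\,\om_r^\al = -\frac{1}{2(\al+1)}\gr\pr{\om_r^{\al+1}}$, so $\int_{B_r}\gr f\cdot x\,\om_r^\al\,dx = -\frac{1}{2(\al+1)}\int_{B_r}\gr f\cdot\gr\pr{\om_r^{\al+1}}\,dx$. Applying Green's identity to move the gradient from $\om_r^{\al+1}$ onto $f$ produces a boundary integral over $\del B_r$ carrying the factor $\om_r^{\al+1}$ — which vanishes, since $\al + 1 \ge 2$ forces $\om_r^{\al+1} = 0$ on $\del B_r$ — together with the interior term $-\int_{B_r}\pr{\LP f}\om_r^{\al+1}\,dx$. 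Tracking signs gives $\int_{B_r}\gr f\cdot x\,\om_r^\al\,dx = \frac{1}{2(\al+1)}\int_{B_r}\pr{\LP f}\om_r^{\al+1}\,dx$; dividing by $r$ and substituting into the first formula yields the second.

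The main subtlety is regularity rather than any nontrivial estimate: the rescaling step needs $f$ differentiable enough to pass the $r$-derivative inside the integral (which holds in our applications, where $f$ is built from $\abs{u}^2$ and $\abs{\gr u}^2$ for a solution $u$), while the integration by parts for the second identity needs $f \in C^2$, or $H^2_{\mathrm{loc}}$, so that $\LP f$ is defined. The choice of the weight $\om_r$, vanishing on $\del B_r$, is precisely what makes every boundary contribution disappear, so I expect no further obstacles.
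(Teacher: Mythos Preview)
Your argument is correct. For the second identity you proceed exactly as the paper does, recognizing $x\,\om_r^{\al}$ as $-\frac{1}{2(\al+1)}\gr\pr{\om_r^{\al+1}}$ and integrating by parts with the boundary term vanishing. For the first identity, however, you take a genuinely different route: the paper differentiates $F(r)$ directly in the original variables to obtain $F'(r)=2\al r\int_{B_r} f\,\om_r^{\al-1}$, then inserts the algebraic identity $r^2=\om_r+\abs{x}^2$ and integrates by parts on the $\abs{x}^2\om_r^{\al-1}$ piece using $2\al x\,\om_r^{\al-1}=-\gr\pr{\om_r^{\al}}$. Your rescaling $x=ry$ bypasses both the intermediate appearance of $\om_r^{\al-1}$ and that algebraic step, yielding the $(2\al+n)/r$ coefficient immediately from the homogeneity factor $r^{2\al+n}$. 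The rescaling is cleaner and more conceptual here; the paper's approach has the minor advantage of staying in the original coordinates throughout, which aligns more directly with the variable-coefficient computations later in Section~\ref{S:GeneralS} where such a rescaling is unavailable.
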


\begin{proof}
Differentiating $F(r)$ with respect to $r$ gives
\begin{align*}
F'(r) 
&= 2 \al r \int_{B_r} f(x) \om_r(x)^{\al-1} dx 
= \frac{2 \al} r \int_{B_r} f(x) \om_r(x)^{\al-1} \pr{r^2 - \abs{x}^2 + \abs{x}^2}dx \\
&= \frac{2 \al} r F(r) + \frac {2\al} r \int_{B_r} f(x) \abs{x}^2  \om_r(x)^{\al-1} dx.
\end{align*}
For the second term, we notice that 
\begin{align*}
2\al \int_{B_r} f(x) \abs{x}^2  \om_r(x)^{\al-1} dx
&=  \int_{B_r} f(x) x \cdot 2 \al x \, \om_r(x)^{\al-1} dx
= - \int_{B_r} f(x) x \cdot \gr \brac{ \om_r(x)^{\al}} dx \\
&= \int_{B_r} \di \brac{f(x) x} \om_r(x)^{\al} dx
= n F(r) + \int_{B_r} \gr f(x) \cdot x \, \om_r(x)^{\al} dx,
\end{align*}
where we have integrated by parts.
Integrating by parts again shows that
\begin{align*}
\int_{B_r} \gr f(x) \cdot x \, \om_r(x)^{\al} dx
&= \frac 1 {2 (\al + 1)} \int_{B_r} \gr f(x) \cdot 2 (\al + 1) x \, \om_r(x)^{\al} dx \\
&= \frac 1 {2 (\al + 1)} \int_{B_r} \LP f(x)  \om_r(x)^{\al+1} dx.
\end{align*}
Combining all of these computations leads to the conclusion.
\end{proof}

Now we use the weight function from above to define our frequency function, then we estimate its derivative.
The new idea in this proof is to complete the square, see \eqref{Dderiv}.

\begin{lemma}[Frequency function]
\label{monoLemma}
For some $R > 0$, let $u \in H^2(B_R)$.
Fix $\al \ge 2$.
For $r \in (0, R)$, define
\begin{equation}
\label{HDLDefn}
\begin{aligned}
H(r) &:= \int_{B_r} \abs{u(x)}^2  \om_r(x)^{\al-1} dx \\
D(r) &:= \int_{B_r} \abs{\gr u(x)}^2  \om_r(x)^{\al} dx \\
L(r) &:= \int_{B_r} u(x) \LP u(x) \, \om_r(x)^{\al} dx,
\end{aligned}
\end{equation}
where $\om_r$ is from \eqref{weightF}.
Then
\begin{equation}
\label{Hderiv}
H'(r) =  \frac{2 (\al-1) + n}{r} H(r) + \frac {D(r) + L(r)}{\al r}
\end{equation}
and with
\begin{equation}
\label{NDefn}
N(r) := \frac{D(r)}{H(r)}
\end{equation}
it holds that
\begin{equation}
\label{N'Bound}
N'(r) 
\ge - \frac{1}{4 \al r H(r)} \int_{B_r} \abs{\LP u(x)}^2 \om_r(x)^{\al+1}dx.
\end{equation}
\end{lemma}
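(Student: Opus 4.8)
The plan is to derive companion formulas for $H'(r)$ and $D'(r)$, to combine them into an expression for $N'(r)$ in which the only ``bad'' terms involve $\LP u$, and then to extract \eqref{N'Bound} by completing a square. Throughout we may assume $H(r)>0$ (otherwise $u\equiv 0$ on $B_r$ and there is nothing to prove), and since $u\in H^2(B_R)$ all the integrals below are finite and continuously differentiable in $r$; moreover every integration by parts on $B_r$ produces no boundary contribution because $\om_r$ vanishes on $\del B_r$ and $\al\ge 2$.

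First I would establish \eqref{Hderiv}: apply Lemma \ref{derivLem} with $f=\abs{u}^2$ and exponent $\al-1$ (this is exactly where $\al\ge 2$ is used, so that the exponent is $\ge 1$), and substitute $\LP\pr{\abs u^2}=2\abs{\gr u}^2+2u\LP u$ into its second identity. Next I would produce a usable formula for $D'(r)$. Differentiating $D(r)$ directly gives $D'(r)=2\al r\int_{B_r}\abs{\gr u}^2\om_r^{\al-1}dx$, and to rewrite the right-hand side I would integrate the Rellich--Ne\v{c}as identity
\[
\di\pr{(\gr u\cdot x)\,\gr u-\tfrac12\abs{\gr u}^2x}=(\gr u\cdot x)\LP u+\tfrac{2-n}{2}\abs{\gr u}^2
\]
against $\om_r^\al$ over $B_r$; using $\gr\om_r=-2x$ and $\abs{x}^2=r^2-\om_r$ (and feeding in $D'(r)=2\al r\int_{B_r}\abs{\gr u}^2\om_r^{\al-1}dx$ once more) this yields
\[
D'(r)=\frac{4\al}{r}P(r)-\frac{2}{r}G(r)+\frac{2\al-2+n}{r}D(r),\qquad P(r):=\int_{B_r}(\gr u\cdot x)^2\om_r^{\al-1}dx,\quad G(r):=\int_{B_r}(\gr u\cdot x)\LP u\,\om_r^{\al}dx.
\]
I would also record the identity
\[
D(r)+L(r)=2\al\int_{B_r}u\,(\gr u\cdot x)\,\om_r^{\al-1}dx,
\]
obtained by a single integration by parts in $D(r)$; call this relation $(\star)$.

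With these in hand, write $N'(r)H(r)^2=D'(r)H(r)-D(r)H'(r)$; the two terms of the form $\tfrac{2\al-2+n}{r}D(r)H(r)$ cancel, leaving
\[
\al r\,N'(r)H(r)=4\al^2P(r)-2\al G(r)-\frac{D(r)\pr{D(r)+L(r)}}{H(r)}.
\]
Now comes the key step. For a parameter $a$ to be chosen, expand the nonnegative quantity
\[
0\le\int_{B_r}\pr{\tfrac12\,\om_r^{(\al+1)/2}\LP u-2\al\,\om_r^{(\al-1)/2}(\gr u\cdot x)-a\,\om_r^{(\al-1)/2}u}^2dx,
\]
whose six terms are $\tfrac14\int_{B_r}\abs{\LP u}^2\om_r^{\al+1}dx$, $4\al^2P(r)$, $a^2H(r)$, and the cross-terms $-2\al G(r)$, $-aL(r)$, and $2a\pr{D(r)+L(r)}$ (the last one via $(\star)$). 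Substituting the resulting bound for $2\al G(r)$ into the displayed formula cancels the $4\al^2P(r)$, and one is left with
\[
\al r\,N'(r)H(r)\ \ge\ -\tfrac14\int_{B_r}\abs{\LP u}^2\om_r^{\al+1}dx\ -\ \brac{a^2H(r)+a\pr{L(r)+2D(r)}+\frac{D(r)\pr{D(r)+L(r)}}{H(r)}}.
\]
The bracket is a quadratic in $a$ with positive leading coefficient $H(r)$; choosing $a=-\pr{L(r)+2D(r)}/\pr{2H(r)}$ minimizes it, with minimum value $-L(r)^2/\pr{4H(r)}\le 0$. Hence $\al r\,N'(r)H(r)\ge-\tfrac14\int_{B_r}\abs{\LP u}^2\om_r^{\al+1}dx+L(r)^2/\pr{4H(r)}$, which is \eqref{N'Bound}.

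The main obstacles are two. First, producing the ``right'' formula for $D'(r)$: a careless integration by parts in $\int_{B_r}\abs{\gr u}^2\om_r^{\al-1}dx$ generates weights $\om_r^{\al-2}$ or even $\om_r^{\al-3}$, which fail to be integrable when $\al=2$, so one must route the computation through the Rellich identity to keep every weight exponent in $\set{\al-1,\al,\al+1}$. Second, identifying the square to complete: its coefficients $\tfrac12$ and $2\al$ are forced by the need to match the $-2\al G(r)$ and $4\al^2P(r)$ appearing in $\al r\,N'(r)H(r)$, and the only genuine verification is the pleasant cancellation at the end — that minimizing the residual quadratic in $a$ absorbs the leftover term $-D(r)\pr{D(r)+L(r)}/H(r)$ up to the harmless nonnegative quantity $L(r)^2/\pr{4H(r)}$.
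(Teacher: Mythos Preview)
Your proof is correct and follows essentially the same route as the paper: you obtain the same formulas for $H'(r)$ and $D'(r)$ (your Rellich--Ne\v{c}as computation is just a repackaging of the paper's direct integration by parts), and your single three-term square with the optimizable parameter $a$ is exactly the paper's two-step ``complete the square in $D'$, then apply Cauchy--Schwarz with $u$'' written in one stroke. In particular, your optimal choice of $a$ reproduces the paper's Cauchy--Schwarz step, and you recover the same surplus term $L(r)^2/\pr{4H(r)}$ before dropping it.
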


\begin{proof}
With $H(r)$ as given, an application of Lemma \ref{derivLem} with $f(x) = \abs{u(x)}^2$ shows that
\begin{align*}
H'(r) 
&=  \frac{2 (\al-1) + n}{r} H(r) + \frac 1 {\al r} I(r),
\end{align*}
where
\begin{equation}
\label{IDefn}
\begin{aligned}
I(r) &:= 2 \al  \int_{B_r} u \pr{x \cdot \gr u} \om_r^{\al-1}
= D(r) + L(r).
\end{aligned}
\end{equation}
In particular, \eqref{Hderiv} has been shown.
Another application of Lemma \ref{derivLem}, now with $f(x) = \abs{\gr u(x)}^2$, shows that
\begin{align*}
D'(r) 
&= \frac{2 \al + n}{r} \int_{B_r} \abs{\gr u}^2 \om_r^{\al}
+ \frac{2}{r} \int_{B_r} \sum_{i, j} \del_{ij} u \del_i u x_j \om_r^{\al}.
\end{align*}
For the second term, an integration by parts gives
\begin{align*}
 \int_{B_r} \sum_{i, j} \del_{ij} u \del_i u x_j \om_r^{\al}
 &= -  \int_{B_r} \sum_{i, j} \del_{j} u \del_i \brac{\del_i u x_j \om_r^{\al}} \\
& = -  \int_{B_r} \pr{x \cdot \gr u} \LP u \, \om_r^{\al}
-  \int_{B_r} \abs{\gr u}^2 \om_r^{\al}
+ 2 \al  \int_{B_r} \pr{x \cdot \gr u}^2 \om_r^{\al-1}.
\end{align*}
Therefore,
\begin{equation}
\label{Dderiv}
\begin{aligned}
D'(r) 
&= \frac{2 (\al-1) + n}{r} D(r)
+ \frac{4 \al}{r}  \int_{B_r} \pr{x \cdot \gr u}^2 \om_r^{\al-1}
- \frac{2}{r} \int_{B_r} \pr{x \cdot \gr u} \LP u \, \om_r^{\al} \\
&= \frac{2 (\al-1) + n}{r} D(r)
+ \frac{4\al }{r}  \int_{B_r} \pr{ x \cdot \gr u - \frac{ \LP u \, \om_r}{4\al}}^2 \om_r^{\al-1}
- \frac{1}{4\al r} \int_{B_r} \abs{\LP u}^2 \om_r^{\al+1},
\end{aligned}
\end{equation}
where we have completed the square to reach the second line.
Set 
\begin{equation}
\label{JDefn}
J(r) := \frac {I(r) + D(r)} 2 = 2 \al \int_{B_r} u \pr{ x \cdot \gr u - \frac{ \LP u \, \om_r}{4 \al} } \om_r^{\al-1}
\end{equation}
so that 
\begin{equation}
\label{Jrelat}
\begin{aligned}
D(r) &= J(r) - \frac 1 2 L(r) \\
I(r) &= J(r) + \frac 1 2 L(r).
\end{aligned}
\end{equation}
Using \eqref{Dderiv} and \eqref{Hderiv} with \eqref{IDefn}, then \eqref{Jrelat}, then \eqref{HDLDefn} and \eqref{JDefn}, we see that
\begin{align*}
& H^2(r) N'(r) \\
&= D'(r) H(r) - H'(r) D(r) \\
&= \brac{\frac{2 (\al-1) + n}{r} D(r)
+ \frac{4 \al }{r}  \int_{B_r} \pr{ x \cdot \gr u - \frac{\LP u \, \om_r}{4 \al } }^2 \om_r^{\al-1} - \frac{1}{4 \al r} \int_{B_r} \abs{\LP u}^2 \om_r^{\al+1}} H(r)  \\
&- \brac{\frac{2 (\al-1) + n}{r} H(r) + \frac {I(r)}{ \al r}} D(r) \\
&= \brac{\frac{4 \al }{r}  \int_{B_r} \pr{ x \cdot \gr u - \frac{\LP u \, \om_r}{4 \al } }^2 \om_r^{\al-1} - \frac{1}{4 \al r} \int_{B_r} \abs{\LP u}^2 \om_r^{\al+1}} H(r) \\
&-  \frac 1 {\al r} \brac{J(r) + \frac 1 2 L(r)}\brac{J(r) - \frac 1 2 L(r)}  \\
&= \frac{4 \al }{r} H(r) \int_{B_r} \pr{ x \cdot \gr u - \frac{\LP u \, \om_r}{4 \al }}^2 \om_r^{\al-1}
- \frac{\brac{J(r)}^2}{ \al r}  
+ \frac {\brac{L(r)}^2}{4 \al r} 
- \frac{1}{4 \al r} H(r) \int_{B_r} \abs{\LP u}^2 \om_r^{\al+1}  \\
&= \frac{4 \al }{r} \set{{\int_{B_r} \abs{u}^2 \om_r^{\al-1}} \cdot { \int_{B_r} \pr{ x \cdot \gr u - \frac{\LP u \, \om_r}{4 \al } }^2 \om_r^{\al-1}} 
- \brac{ \int_{B_r} u \pr{ x \cdot \gr u - \frac{\LP u \, \om_r}{4 \al } } \om_r^{\al-1}}^2} \\
&+ \frac 1 {4 \al r} \set{ \brac{\int_{B_r} u \LP u \, \om_r^{\al}}^2 - H(r)  \int_{B_r} \abs{\LP u}^2 \om_r^{\al+1} }.
\end{align*}
An application of Cauchy-Schwarz shows that the first line is non-negative and we get \eqref{N'Bound}.
\end{proof}

\begin{cor}[Monotonicity result]
\label{monoCorVIny}
With $N(r)$ as in \eqref{NDefn}, define
\begin{equation*}
\widetilde{N}(r) := N(r) + \frac{M^2 r^4}{16\al}.
\end{equation*}
If $\norm{V}_{L^\iny\pr{B_R}} \le M$ and $u$ is a solution to $- \LP u + V u= 0$ in $B_R$, then $\widetilde{N}(r)$ is a non-decreasing function of $r$.
\end{cor}

\begin{proof}
Since $-\LP u + V u= 0$ in $B_R$, then
\begin{align*}
\int_{B_r} \abs{\LP u}^2 \om_r^{\al+1} 
&=  \int_{B_r} \abs{V u}^2 \om_r^{\al+1} 
\le M^2 r^4 H(r).
\end{align*}
Substituting this bound into \eqref{N'Bound} from Lemma \ref{monoLemma} shows that $\disp N'(r) \ge - \frac{ M^2 r^4 }{4 \al r}$ and the result follows.
\end{proof}

Using the monotonicity result above and choosing $\al \gg 1$ leads to the following three-balls inequality.

\begin{prop}[Three-ball inequalities]
\label{threeBallsV}
Let $u$ be a solution to $- \LP u + V u = 0$ in $B_R$, where $\norm{V}_{L^\iny(B_R)} \le M$.
For any $0 < r_1 < r_2 < 2r_2 < r_3 < R$, it holds that
\begin{align*}
\norm{u}_{L^2(B_{r_2})} 
&\le \exp\set{C {\brac{1 + \pr{M R^2}^{\frac 2 3}}}} \norm{u}_{L^2(B_{r_1})} ^{\kappa} \norm{u}_{L^2(B_{r_3})} ^{1 - \kappa} ,
\end{align*}
where $C > 0$ is a universal constant and $\disp \kappa := \frac{\log (r_3) - \log\pr{2 r_2}}{\log (r_3) - \log\pr{r_1}}$.
\end{prop}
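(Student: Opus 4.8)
The plan is to convert the almost-monotonicity of $\widetilde N$ from Corollary \ref{monoCorV} into a log-convexity estimate for the weighted mass $H$, then trade the weight $\om_r$ for honest $L^2$-balls, and finally optimize the free parameter $\al$. The starting point is the logarithmic derivative of $H$: dividing \eqref{Hderiv} by $H(r)$ and using $D(r)/H(r) = N(r)$ gives
\[
\frac{d}{dr}\log\frac{H(r)}{r^{2(\al-1)+n}} = \frac{N(r) + L(r)/H(r)}{\al r}.
\]
Since $\LP u = Vu$, we have $\abs{L(r)} = \abs{\int_{B_r} V\abs{u}^2\om_r^{\al}} \le Mr^2 H(r)$, so $\abs{L(r)/H(r)} \le Mr^2$; and writing $N(r) = \widetilde N(r) - M^2r^4/(16\al)$ with $\widetilde N$ non-decreasing, the integrand above is monotone in $r$ up to errors that are $O(M^2R^4/\al)$ and $O(MR^2)$ on $(0,R)$.

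Next, I would fix $\al \ge 2$, set $g(r) := \log\pr{H(r)/r^{2(\al-1)+n}}$, and apply the previous step with the \emph{middle radius $2r_2$} (this choice is dictated by the conversion step below). Integrating $g'$ over $[r_1,2r_2]$ and over $[2r_2,r_3]$ — bounding $N(r) \le N(2r_2) + O(M^2R^4/\al)$ on the first interval and $N(r)\ge N(2r_2) - O(M^2R^4/\al)$ on the second, both by monotonicity of $\widetilde N$, and absorbing $L/H$ into an $O(MR^2/\al)$ error — and then eliminating the unknown quantity $N(2r_2)$ between the two resulting inequalities (the standard convexity bookkeeping, which incidentally makes the $r^{2(\al-1)+n}$ prefactor cancel) yields
\[
\log H(2r_2) \le \kappa \log H(r_1) + \pr{1-\kappa}\log H(r_3) + E, \qquad \kappa = \frac{\log r_3 - \log\pr{2r_2}}{\log r_3 - \log r_1},
\]
where the error satisfies $E \le C\pr{M^2R^4/\al^2 + MR^2/\al}$, up to a harmless factor built from the $\log$-ratios of $r_1,r_2,r_3$.

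Then I pass from $H$ to $L^2$-norms. On the left, $\om_{2r_2}(x) = 4r_2^2 - \abs{x}^2 \ge 3r_2^2$ for $x\in B_{r_2}$, so $H(2r_2) \ge \pr{3r_2^2}^{\al-1}\norm{u}_{L^2(B_{r_2})}^2$; on the right, $H(r)\le r^{2(\al-1)}\norm{u}_{L^2(B_r)}^2$. Inserting these into the displayed inequality, the radius-dependent prefactors collapse because $r_1^{\kappa}r_3^{1-\kappa} = 2r_2$ — which is exactly how $\kappa$ was chosen — leaving a single factor $\pr{4/3}^{\al-1}$; hence the total constant has the shape $\exp\brac{c\,\al + c'\pr{M^2R^4/\al^2 + MR^2/\al}}$. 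Choosing $\al$ of order $\pr{MR^2}^{2/3}$ (and simply $\al = 2$ when $MR^2$ is bounded) balances the first two terms and renders the third lower order, giving the asserted bound $\exp\brac{C\pr{MR^2}^{2/3}}$.

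The crux is not any individual estimate — the analytic content is already in Lemma \ref{monoLemma} and Corollary \ref{monoCorV} — but rather the bookkeeping that keeps the correction term \emph{quadratic} in $M$ and carrying the factor $1/\al$; this is precisely what the completion of the square in Lemma \ref{monoLemma} buys, and it is what produces the exponent $\tfrac 2 3$ instead of the weaker exponent one would get from a cruder treatment of the $\LP u = Vu$ terms. The only other delicate point is arranging the passage from the weighted quantity $H$ to honest $L^2$-balls so that the spurious radius-dependent constants cancel, which is exactly why the inequality must be stated with middle radius $2r_2$ and with $\kappa$ of the displayed form.
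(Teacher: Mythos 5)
Your proposal is correct and follows essentially the same route as the paper's proof: bound the logarithmic derivative of $H$ above and below in terms of the almost-monotone quantity $\widetilde N$, integrate over $[r_1,2r_2]$ and $[2r_2,r_3]$, eliminate $\widetilde N(2r_2)$, convert $H$ to plain $L^2$-masses (which is what forces the middle radius $2r_2$ and the displayed $\kappa$), and finally balance $\al$ against $MR^2$ by taking $\al\simeq (MR^2)^{2/3}$. The only differences from the paper's write-up are cosmetic bookkeeping choices (dividing by $r^{2(\al-1)+n}$ before integrating, and expressing the error in $O$-notation rather than tracking the exact constants $\tfrac{M^2r^4}{16\al}$, $\tfrac{MR^2}{2\al}$), none of which affect correctness or the final exponent.
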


\begin{proof}
From \eqref{Hderiv}, we have
\begin{align*}
\frac{H'(r) }{H(r)}
&=  \frac{2 (\al-1) + n}{r} + \frac 1 {\al r} \frac{D(r) + L(r)}{H(r)}
=  \frac{2 \al + n - 2}{r} + \frac {N(r)} {\al r} + \frac 1 {\al r} \frac{L(r)}{H(r)},
\end{align*}
where we recall \eqref{NDefn}.
From \eqref{HDLDefn} and the assumption that $\abs{\LP u} = \abs{V u} \le M \abs{u}$, we get $L(r) \le M r^2 H(r)$.
Therefore, 
\begin{align*}
\frac{H'(r) }{H(r)}
&\ge  \frac{2 \al + n-2}{r} + \frac 1 {\al r}\brac{N(r) + \frac {M^2 r^4} {16 \al }} - \frac {M^2 r^3}{16 \al^2} - \frac {Mr} {\al }
\end{align*}
and
\begin{align*}
\frac{H'(r) }{H(r)}
&\le  \frac{2 \al + n-2}{r} + \frac 1 {\al r}\brac{N(r) + \frac {M^2 r^4} {16 \al }} + \frac {Mr} {\al }.
\end{align*}
An application of Corollary \ref{monoCorVIny} shows that the bracketed terms are equal to $\widetilde{N}(r)$ and are non-decreasing.
Integrating the lower bound from $2r_2 < r_3 < R$ shows that
\begin{align*}
\log\brac{\frac{H(r_3)}{H(2r_2)}}
&= \int_{2r_2}^{r_3} \frac{H'(r) }{H(r)} dr
\ge  \int_{2r_2}^{r_3} \set{\frac{2 \al + n-2}{r} + \frac {\widetilde{N}(r)}{\al r} - \frac {M^2 r^3}{16 \al^2}  - \frac {Mr} {\al }} dr \\
&\ge \brac{2 \al + n - 2 + \frac{\widetilde{N}(2r_2)}{\al}} \int_{2r_2}^{r_3} \frac 1 {r} dr
- \frac {M^2 }{64 \al^2} \int_{2r_2}^{r_3}  4 r^3 dr
- \frac {M} {2 \al} \int_{2r_2}^{r_3} 2 rdr \\
&\ge \brac{2 \al + n - 2 + \frac{\widetilde{N}(2r_2)}{\al} }  \log\pr{\frac{r_3}{2 r_2}} 
- \frac {M^2 r_3^4}{64 \al^2} 
- \frac {M r_3^2} {2\al } .
\end{align*}
On the other hand, integrating the upper bound from $0 < r_1 < 2 r_2$ gives
\begin{align*}
\log\brac{\frac{H(2r_2)}{H(r_1)}}
&= \int_{r_1}^{2r_2} \frac{H'(r) }{H(r)} dr 
\le \int_{r_1}^{2r_2} \set{\frac{2 \al + n-2}{r} + \frac{\widetilde{N}(r)}{\al r} + \frac {Mr} {\al }} dr \\
&\le \brac{2 \al + n -2 + \frac{\widetilde{N}(2r_2)}{\al }} \log\pr{\frac{2r_2}{r_1}} 
+ \frac {M \pr{2 r_2}^2} {2\al}.
\end{align*}
Combining these inequalities shows that 
\begin{align*}
\frac{\log\brac{\frac{H(2r_2)}{H(r_1)}} -  \frac {M \pr{2 r_2}^2} {2\al}}{\log\pr{\frac{2r_2}{r_1}} }
&\le 2 \al + n -2 + \frac{\widetilde{N}(2r_2)}{\al } 
\le \frac{\log\brac{\frac{H(r_3)}{H(2r_2)}} +\frac {M^2 r_3^4}{64 \al^2} + \frac {M r_3^2} {2\al } }{\log\pr{\frac{r_3}{2 r_2}} }.
\end{align*}

For $r \in (0, R)$, let
\begin{equation}
\label{hDefn}
h(r) := \int_{B_r} \abs{u(x)}^2 dx.
\end{equation}
Observe that $H(r) \le r^{2(\al-1)} h(r)$ while for any $0 < r < \rho \le 1$, $h(r) \le \frac{H(\rho)}{\pr{\rho^2 - r^2}^{\al-1}}$.
In particular, 
\begin{equation*}
H(2 r_2) \ge \pr{3 r_2^2}^{\al-1} h(r_2) = \pr{\frac 3 4}^{\al-1} \pr{2 r_2}^{2\pr{\al-1}} h(r_2).
\end{equation*}
Therefore,
\begin{equation}
\label{hObs}
\begin{aligned}
\log\brac{\frac{H(r_3)}{H(2r_2)}}
&\le \log\brac{\frac{r_3^{2\pr{\al-1}} h(r_3) \pr{\frac 4 3}^{\al-1} }{\pr{2 r_2}^{2\pr{\al-1}}  h(r_2)}}  \\
&= \log\brac{\frac{h(r_3)}{h(r_2)}} + \pr{\al-1}\brac{2 \log\pr{\frac{r_3}{2 r_2}} + \log\pr{\frac 4 3}} \\
\log\brac{\frac{H(2r_2)}{H(r_1)}}
&\ge \log\brac{\frac{\pr{2 r_2}^{2\pr{\al-1}} h(r_2)}{r_1^{2\pr{\al-1}} h(r_1) \pr{\frac 4 3}^{\al-1} }} \\
&= \log\brac{\frac{h(r_2)}{h(r_1)}} + \pr{\al-1} \brac{2 \log\pr{\frac{2 r_2}{r_1}} - \log\pr{\frac 4 3}}.
\end{aligned}
\end{equation}
Substituting these bounds into the expression above gives
\begin{align*}
\frac{\log\brac{\frac{h(r_2)}{h(r_1)}} - \pr{\al-1} \log\pr{\frac 4 3} -  \frac {M \pr{2 r_2}^2} {2\al}}{\log\pr{\frac{2r_2}{r_1}} }
&\le \frac{\log\brac{\frac{h(r_3)}{h(r_2)}} + \pr{\al-1} \log\pr{\frac 4 3} +\frac {M^2 r_3^4}{64 \al^2} + \frac {M r_3^2} {2\al } }{\log\pr{\frac{r_3}{2 r_2}} }.
\end{align*}
Set $\be = \log\pr{\frac{2r_2}{r_1}}$ and $\ga = \log\pr{\frac{r_3}{2 r_2}} $, then simplify to get
\begin{align*}
 \log h(r_2) 
&\le \log h(r_1)^{\frac{\ga}{\be + \ga}} + \log h(r_3)^{\frac{\be}{\be + \ga}} 
+ (\al-1) \log\pr{\frac 4 3}  + \frac {M R^2} {2\al} + \frac {M^2 R^4}{64 \al^2}.
\end{align*}
Exponentiating then shows that
\begin{align*}
h(r_2) 
&\le \exp\brac{\al \log\pr{\frac 4 3}  + \frac {M R^2} {2\al} + \pr{\frac{M R^2}{8 \al}}^2 } h(r_1)^{\kappa} h(r_3)^{1 - \kappa} ,
\end{align*}
where $\disp \kappa = \frac{\log (r_3) - \log\pr{2 r_2}}{\log (r_3) - \log\pr{r_1}}$.
The exponential term is optimized when $\al \simeq \pr{M R^2}^{\frac 2 3}$, {so we choose $\al = 2\brac{1 + \pr{MR^2}^{2/3}} \ge 2$,} which leads to the conclusion.
\end{proof}

\section{Generalized Schr\"odinger operators}
\label{S:GeneralS}

In this section, we consider much more general Schr\"odinger operators with variable coefficients, a first-order term, and a zeroth-order term that may be singular.
Consequently, we begin with a technical lemma that helps us estimate the derivative of our variable-coefficient frequency function.
Then we define the frequency function and estimate its derivative by applying the previous lemma and completing the square.
From here, we find the associated function that is monotone non-decreasing whenever $u$ is a solution.
As in the previous section, the monotonicity result leads to a three-ball inequality if we choose $\al$ sufficiently large.
Given the presence of variable coefficients and lower order terms, the arguments here are somewhat more complicated, but we follow the same scheme from the previous section.

Throughout this section, we assume that $A : \R^n \to \mathbb{M}_n$ is a symmetric, bounded, uniformly elliptic matrix with Lipschitz continuous coefficients.
That is, with $A = \pr{a_{ij}}_{i,j = 1}^n$, there exists $\la, \eta > 0$ so that \eqref{symm} -- \eqref{LipCond} hold.
Before proceeding to the frequency functions, we'll collect some estimates for functions associated to the matrix $A$.
The function $\mu$ below was introduced in \cite{GL86} and has been used in many instances since, while the vector field $Z(x)$ appears in \cite{GPSVG18}, for example.

\begin{lemma}[Variable-coefficient estimates]
\label{AestLemma}
Let $A : \R^n \to \mathbb{M}_n$ satisfy \eqref{symm} -- \eqref{LipCond}.
Define the function $\mu : \R^n \setminus \set{0} \to [\la, \la^{-1}]$ by
\begin{equation}
\label{muDefn}
\mu(x) = \frac{A(x) x \cdot x}{\abs{x}^2}
\end{equation}
and the vector field $Z : \R^n \setminus \set{0} \to \R^n$ by
\begin{equation}
\label{ZDefn}
Z(x) = \frac{A(x) x}{\mu(x)}.
\end{equation}
If $A(0) = I$, then there exist constants $c_1(n, \la), c_2(n, \la), c_3(n, \la) > 0$ so that for a.e. $x \in B_r$,
\begin{align}
&\abs{\di \brac{A(x) x}\mu(x)^{-1} - n }
\le c_1 \eta r
\label{diAnmu} \\
&\abs{\de_{ik} - \del_i Z_k(x)} \le c_2 \eta r 
\label{delZkcomp} \\
&\abs{\di Z(x) - n} \le c_3 \eta r.
\label{divZest}
\end{align}
\end{lemma}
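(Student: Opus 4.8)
The plan is to reduce all three estimates to a single elementary fact: since $A(0) = I$ and each $a_{ij}$ is Lipschitz with constant at most $\eta$ by \eqref{LipCond}, we have $\abs{a_{ij}(x) - \de_{ij}} = \abs{a_{ij}(x) - a_{ij}(0)} \le \eta \abs{x} \le \eta r$ for every $x \in B_r$. Combined with \eqref{Abound}, which gives the pointwise bound $\abs{a_{ij}(x)} \le \la^{-1}$, and with \eqref{ellip}--\eqref{Abound}, which give $\la \le \mu(x) \le \la^{-1}$ and hence $\abs{\mu(x)^{-1} - 1} \le \la^{-1} \abs{\mu(x) - 1}$, this is the only analytic input needed; the rest is bookkeeping, carried out under the convention that one always extracts one power of $\eta$ and one power of a length from the finitely many sums before estimating, so that every constant depends only on $n$ and $\la$.

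First I would control $\mu$ and its gradient. Writing $\mu(x) - 1 = \abs{x}^{-2} \sum_{i,j} \pr{a_{ij}(x) - \de_{ij}} x_i x_j$ gives $\abs{\mu(x) - 1} \le C(n) \eta r$, hence $\abs{\mu(x)^{-1} - 1} \le C(n,\la) \eta r$. Differentiating $\mu$ away from the origin (where it is Lipschitz, hence differentiable a.e.) and using the symmetry \eqref{symm}, one obtains the identity $\del_i \mu(x) = \abs{x}^{-2} \sum_{p,q} \pr{\del_i a_{pq}(x)} x_p x_q + 2 \abs{x}^{-2} \brac{\pr{A(x)x}_i - \mu(x) x_i}$; since $\pr{A(x)x}_i - \mu(x) x_i = \brac{\pr{A(x) - I}x}_i - \pr{\mu(x) - 1} x_i$ is $O(\eta \abs{x}^2)$ by the basic bound and the first sum is $O(\eta \abs{x}^2)$ by \eqref{LipCond}, this yields $\abs{\gr \mu(x)} \le C(n) \eta$ for a.e. $x \in B_r$. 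Next, from $\di\brac{A(x)x} = \tr A(x) + \sum_{i,j} \pr{\del_i a_{ij}} x_j$, the basic bound and \eqref{LipCond} give $\abs{\di\brac{A(x)x} - n} \le \abs{\tr A(x) - n} + \abs{\sum_{i,j}\pr{\del_i a_{ij}} x_j} \le C(n) \eta r$; writing $\di\brac{A(x)x}\mu(x)^{-1} - n = \pr{\di\brac{A(x)x} - n}\mu(x)^{-1} + n\pr{\mu(x)^{-1} - 1}$ and using $\mu^{-1} \le \la^{-1}$ then gives \eqref{diAnmu}.

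For \eqref{delZkcomp}, from $Z_k = \mu^{-1} \pr{A(x)x}_k$ the product rule gives $\del_i Z_k = -\mu^{-2}\pr{\del_i \mu}\pr{A(x)x}_k + \mu^{-1}\sum_j \pr{\del_i a_{kj}} x_j + \mu^{-1} a_{ki}$. The first two summands are $O(\eta r)$ by the gradient bound on $\mu$, by $\abs{\pr{A(x)x}_k} \le \la^{-1}\sqrt{n}\,\abs{x}$, and by \eqref{LipCond}; the third satisfies $\mu^{-1} a_{ki} - \de_{ik} = \pr{\mu^{-1} - 1} a_{ki} + \pr{a_{ki} - \de_{ik}}$, which is $O(\eta r)$. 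This proves \eqref{delZkcomp}, and summing $\del_i Z_k - \de_{ik}$ over $i = k$ gives \eqref{divZest} at once, with $c_3 = n c_2$.

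The main obstacle is organizational rather than conceptual. One must check that the differentiations are legitimate, which holds because $\mu$ and $Z$ are Lipschitz on $\ol{B_r} \setminus B_\rho$ for every $\rho > 0$, hence differentiable a.e., and the resulting identities persist for a.e.\ $x \in B_r$; and one must verify that the constants depend only on $n$ and $\la$ (and not on $\eta$ or $r$), which is exactly what the convention of pre-extracting a factor $\eta$ and a factor $\abs{x}$ from each sum ensures. No cancellation beyond the trivial $\sum_k \de_{kk} = n$ is ever needed, which is why each bound comes out with the clean linear-in-$\eta r$ dependence.
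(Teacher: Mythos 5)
Your proof is correct, and it takes a slightly different (and arguably more modular) route than the paper. The paper establishes \eqref{delZkcomp} by writing $Z_k(x) = \abs{x}^2\,[A(x)x]_k/\pr{A(x)x\cdot x}$ and applying the quotient rule directly, which produces a long multi-line expression for $\del_i Z_k$ that must then be regrouped and estimated term by term. You instead factor $Z_k = \mu^{-1}(Ax)_k$, first derive the standalone bound $\abs{\gr \mu(x)} \le C(n)\eta$, and then apply the product rule, so that $\del_i Z_k$ splits into just three pieces, each transparently $O(\eta r)$ once the elementary facts $\abs{a_{ij}(x) - \de_{ij}}\le \eta r$, $\abs{\mu - 1}\le C\eta r$, $\abs{\mu^{-1} - 1}\le C\eta r$ are in hand. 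For \eqref{diAnmu}, the paper estimates $\di[Ax] - n\mu$ in one go and divides by $\mu$, whereas you split as $(\di[Ax]-n)\mu^{-1} + n(\mu^{-1}-1)$; these are equivalent. What your approach buys is a shorter, cleaner computation for the hardest estimate, with the gradient bound on $\mu$ isolated as a reusable lemma; what the paper's direct expansion buys is that one never needs to separately control $\gr\mu$, at the cost of more algebra. Both correctly identify where $A(0)=I$ enters (via $a_{ij}(x)-\de_{ij} = a_{ij}(x)-a_{ij}(0)$) and produce constants depending only on $n$ and $\la$.
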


\begin{proof}
Since $A(0) = I$, then 
\begin{align*}
\di \brac{A(x) x}
&= \sum_{i, j} \del_i \brac{a_{ij}(x) x_j}
= \sum_{i, j} \del_i a_{ij}(x) x_j + \sum_{i} a_{ii}(x) \\
&= \sum_{i, j} \del_i a_{ij}(x) x_j + \sum_{i} \brac{a_{ii}(x) -a_{ii}(0)} + n
\end{align*}
and
\begin{align*}
\mu(x) 
&= \frac{ \sum_{i, j} a_{ij}(x) x_i x_j }{\abs{x}^2}
= \frac{\sum_{i, j} \brac{a_{ij}(x) - a_{ij}(0) + \de_{ij}} x_i x_j}{\abs{x}^2}
= 1 +  \frac{\sum_{i, j} \brac{a_{ij}(x) - a_{ij}(0)} x_i x_j}{\abs{x}^2}.
\end{align*}
Therefore,
\begin{align*}
\di \brac{A(x) x} - n \mu(x)
&= \sum_{i, j} \del_i a_{ij}(x) x_j + \sum_{i} \brac{a_{ii}(x) -a_{ii}(0)} - n \frac{\sum_{i, j} \brac{a_{ij}(x) - a_{ij}(0)} x_i x_j}{\abs{x}^2}
\end{align*}
and then the bounds on $\mu(x)$, the mean value theorem, and \eqref{LipCond} imply that there exists a constant $c_1 > 0$ so that for a.e. $x \in B_r$, \eqref{diAnmu} holds.
Since $A(0) = I$, then
\begin{align*}
\brac{A(x) x}_k
&= \sum_{j} a_{kj}(x) x_j
= \sum_{j} \brac{a_{kj}(x) - a_{kj}(0)} x_j + x_k \\
\del_i \brac{A(x) x}_k
&= \sum_{j} \del_i \brac{a_{kj}(x) x_j}
= \sum_{j} \del_i a_{kj}(x) x_j + \brac{a_{ki}(x) - a_{ki}(0)} + \de_{ki} \\
\del_i \brac{A(x) x \cdot x}
&= \sum_{\ell,m} \del_i \brac{a_{\ell m}(x) x_\ell x_m}
= \sum_{\ell,m} \del_i a_{\ell m}(x) x_\ell x_m
+ 2 \sum_{m} \brac{a_{im}(x) -a_{im}(0)} x_m + 2 x_i,
\end{align*}
where we have used the symmetry of $A$.
From here, we get that
\begin{align*}
\del_i Z_k
&= \del_i \brac{ \abs{x}^2 \frac{\brac{A(x) x}_k}{ A(x) x \cdot x}} \\
&= 2  \frac{ \sum_{j} \brac{a_{kj}(x) - a_{kj}(0)} x_j x_i + x_k x_i}{ A x \cdot x} 
+ \abs{x}^2  \brac{\frac{\sum_{j} \del_i a_{kj}(x) x_j + \brac{a_{ki}(x) - a_{ki}(0)} + \de_{ki}}{ Ax \cdot x} } \\
&-  \abs{x}^2 \frac{\sum_j \brac{a_{kj}(x) - a_{kj}(0)} x_j }{ \pr{Ax \cdot x}^2} \brac{\sum_{\ell,m} \del_i a_{\ell m}(x) x_\ell x_m + 2 \sum_{m} \brac{a_{im}(x) -a_{im}(0)} x_m + 2 x_i} \\
&-  \abs{x}^2 \frac{ x_k}{ \pr{Ax \cdot x}^2} \brac{\sum_{\ell,m} \del_i a_{\ell m}(x) x_\ell x_m + 2 \sum_{m} \brac{a_{im}(x) -a_{im}(0)} x_m + 2 x_i} .
\end{align*}
After simplifications, we see that
\begin{align*}
\del_i Z_k
&= \de_{ik}
- \frac{ \sum_{j, \ell} \brac{a_{j\ell}(x) - a_{j \ell}(0) } x_j x_\ell}{ Ax \cdot x} \brac{\de_{ik} - \frac{2 x_i }{ Ax \cdot x} \pr{x_k  + \sum_{j} \brac{a_{kj}(x) - a_{kj}(0)} x_j } }
 \\
&+ \abs{x}^2  \brac{\frac{\sum_{j} \del_i a_{kj}(x) x_j + \brac{a_{ki}(x) - a_{ki}(0)} }{ Ax \cdot x} } \\
&
-  \abs{x}^2 \frac{x_k}{ \pr{Ax \cdot x}^2} \brac{\sum_{\ell,m} \del_i a_{\ell m}(x) x_\ell x_m + 2 \sum_{m} \brac{a_{im}(x) -a_{im}(0)} x_m} \\
&
-  \abs{x}^2 \frac{\sum_j \brac{a_{kj}(x) - a_{kj}(0)} x_j}{ \pr{Ax \cdot x}^2} \brac{\sum_{\ell,m} \del_i a_{\ell m}(x) x_\ell x_m
+ 2 \sum_{m} \brac{a_{im}(x) -a_{im}(0)} x_m }.
\end{align*}
Applications of \eqref{ellip}, \eqref{Abound}, \eqref{LipCond}, and the mean value theorem lead to \eqref{delZkcomp}.
Finally, \eqref{delZkcomp} implies \eqref{divZest}.
\end{proof}

Now we introduce the variable-coefficient frequency function and estimate its derivative.
We essentially follow the approach used to prove Lemma \ref{monoLemma}, but we additionally incorporate the previous lemma to account for the variable coefficients.

\begin{lemma}[Variable-coefficient frequency function]
\label{monoLemmaA}
For some $R > 0$, let $u \in H^2(B_R)$.
With $A : B_R \to \mathbb{M}_n$, define $\mu$ as in \eqref{muDefn}.
For $r \in (0, R)$, $\al \ge 2$, and $\om_r$ from \eqref{weightF}, define
\begin{equation}
\label{HDADefn}
\begin{aligned}
H(r) &:= \int_{B_r} \abs{u(x)}^2 \mu(x) \, \om_r(x)^{\al-1}  dx  \\
D(r) &:= \int_{B_r} A(x) \gr u(x) \cdot \gr u(x) \om_r(x)^{\al} dx,
\end{aligned}
\end{equation}
then set 
\begin{equation}
\label{NADefn}
N(r) := \frac{D(r)}{H(r)}.
\end{equation}
If $A$ is a symmetric, bounded, uniformly elliptic matrix-valued function with Lipschitz continuous coefficients that satisfies \eqref{symm} -- \eqref{LipCond}, and $A(0) = I$, then exists a constant $c_0(n, \la) > 0$ so that 
\begin{equation}
\label{NA'Bound}
N'(r) + c_0 \eta N(r)
\ge - \frac{\int_{B_r}\brac{  \di\pr{A \gr u} }^2 \mu^{-1} \om_r^{\al+1}}{4 \al r H(r)} .
\end{equation}
\end{lemma}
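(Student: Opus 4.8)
The plan is to imitate the proof of Lemma \ref{monoLemma} step by step, using Lemma \ref{AestLemma} to dispose of the terms produced by the non-constancy of $A$. Two structural features make this work. First, the pointwise identities $A(x)x\cdot x = \mu(x)\abs{x}^2$ and $Z(x)\cdot x = \abs{x}^2$, immediate from \eqref{muDefn} and \eqref{ZDefn}, cause several boundary-type integrands to vanish \emph{identically} after an integration by parts --- in particular the terms that would otherwise appear with the wrong power of $\om_r$ and so escape control by $D(r)$ or $H(r)$. Second, since $A(0) = I$, the bound \eqref{LipCond} gives $\norm{\mu(x)I - A(x)}\lesssim\eta\abs{x}$ and $\abs{\gr\mu(x)}\lesssim\eta$ on $B_r$, while $\mu\in[\la,\la^{-1}]$, so every surviving discrepancy from the constant-coefficient computation carries a factor $\eta$ (typically $\eta r$). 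The weight $\mu$ inside $H(r)$ is what lets the error in the identity for $H'$ collapse to the ``$H$-level,'' and the weight $\mu^{-1}$ on the right of \eqref{NA'Bound} is the one that makes the concluding Cauchy--Schwarz step close.

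First I would establish the identity for $H'$. Applying Lemma \ref{derivLem} with $f = \abs{u}^2\mu$ and exponent $\al-1$ gives $H'(r) = \frac{2(\al-1)+n}{r}H(r) + \frac{1}{\al r}I(r)$ with $I(r) = \al\int_{B_r}\gr\pr{\abs{u}^2\mu}\cdot x\,\om_r^{\al-1}$. Substituting $\mu\,u\LP u = u\,\di\pr{A\gr u} - u\,\di\brac{(A-\mu I)\gr u} - u\,\gr\mu\cdot\gr u$ and integrating by parts in the last two terms (using $\al\ge 2$ and $A(x)x\cdot x = \mu(x)\abs{x}^2$ to kill the dangerous $\om_r^{\al-2}$-term), the $\int_{B_r}\mu\abs{\gr u}^2\om_r^{\al}$ contributions cancel and one is left with
\begin{equation*}
I(r) = D(r) + L(r) + \mathcal{E}_1(r), \quad L(r) := \int_{B_r}u\,\di\pr{A\gr u}\,\om_r^{\al}, \quad \mathcal{E}_1(r) := \al\int_{B_r}\abs{u}^2\pr{\di\brac{Ax} - n\mu}\om_r^{\al-1},
\end{equation*}
so that $\abs{\mathcal{E}_1(r)}\le c_1\al\eta r\,H(r)$ by \eqref{diAnmu}.

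Next I would differentiate $D$: Lemma \ref{derivLem} with $f = A\gr u\cdot\gr u$ and exponent $\al$, followed by integration by parts in $\int_{B_r}\gr\pr{A\gr u\cdot\gr u}\cdot x\,\om_r^{\al}$ with the radial field $x$ replaced by $Z$ (estimating $\del_i Z_k - \de_{ik}$, $\di Z - n$, $\gr A$ via \eqref{delZkcomp}, \eqref{divZest}, \eqref{LipCond}) and completing the square as in \eqref{Dderiv}, should give
\begin{equation*}
D'(r) = \frac{2(\al-1)+n}{r}D(r) + \frac{4\al}{r}G(r) - \frac{1}{4\al r}\int_{B_r}\brac{\di\pr{A\gr u}}^2\mu^{-1}\om_r^{\al+1} + \mathcal{E}_2(r),
\end{equation*}
where $\disp G(r) := \int_{B_r}\pr{Z\cdot\gr u - \tfrac{\di\pr{A\gr u}\,\om_r}{4\al}}^2\mu^{-1}\om_r^{\al-1}\ge 0$ and, again thanks to $A(x)x\cdot x = \mu(x)\abs{x}^2$ and $Z(x)\cdot x = \abs{x}^2$ annihilating the would-be wrong-power terms, every summand of $\mathcal{E}_2(r)$ is comparable to $\eta\,D(r)$ (after one further integration by parts where a second derivative of $u$ appears). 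Then I set $J(r) := \tfrac12\pr{I(r)+D(r)}$, so $I(r)D(r) = J(r)^2 - \tfrac14\pr{L(r)+\mathcal{E}_1(r)}^2$, and compute $H^2 N' = D'H - H'D$; the $\tfrac{2(\al-1)+n}{r}$-terms cancel and
\begin{equation*}
H(r)^2 N'(r) = \frac{4\al}{r}G(r)H(r) - \frac{J(r)^2}{\al r} + \frac{\pr{L(r)+\mathcal{E}_1(r)}^2}{4\al r} - \frac{H(r)}{4\al r}\int_{B_r}\brac{\di\pr{A\gr u}}^2\mu^{-1}\om_r^{\al+1} + \mathcal{E}_2(r)H(r).
\end{equation*}
Two applications of Cauchy--Schwarz exactly as in Lemma \ref{monoLemma} --- the first pairing $\abs{u}^2\mu\,\om_r^{\al-1}$ with the integrand of $G$ through the (unweighted) cross term, the second pairing $\abs{u}^2\mu\,\om_r^{\al-1}$ with $\brac{\di\pr{A\gr u}}^2\mu^{-1}\om_r^{\al+1}$ through $L$ --- make the first block nonnegative and leave $-\tfrac{H(r)}{4\al r}\int_{B_r}\brac{\di\pr{A\gr u}}^2\mu^{-1}\om_r^{\al+1}$ plus $\eta$-errors; dividing by $H(r)^2$ and invoking $\abs{\mathcal{E}_1}\lesssim\eta r H$, $\abs{\mathcal{E}_2}\lesssim\eta D$, $\mu\in[\la,\la^{-1}]$, and $\abs{\gr u}^2\le\la^{-1}A\gr u\cdot\gr u$ turns every leftover into a bounded multiple of $\eta\,N(r)$, which yields \eqref{NA'Bound} with a suitable $c_0(n,\la)$.

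\textbf{Main obstacle.} The two differentiation identities and the two Cauchy--Schwarz inequalities are routine, paralleling Lemma \ref{monoLemma}. The delicate part is $\mathcal{E}_2(r)$: one must verify that the variable-coefficient Rellich computation and the square completion produce no error worse than $\eta\,D(r)$ --- no term of order $\eta\,N(r)^{3/2}$, no term proportional to $G(r)$ with a non-negligible coefficient, and no additive constant --- which forces one to use the exact identities $A(x)x\cdot x = \mu(x)\abs{x}^2$ and $Z(x)\cdot x = \abs{x}^2$ at each integration by parts rather than merely estimating. This is where the weights $\mu$ and $\mu^{-1}$, the exponents $\al-1$ and $\al$, the hypothesis $\al\ge 2$, and the full strength of Lemma \ref{AestLemma} all enter, and where the constant $c_0(n,\la)$ is produced.
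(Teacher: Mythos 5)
Your proposal follows the paper's proof almost line for line: apply the weighted derivative identity, use $A(x)x\cdot x = \mu(x)\abs{x}^2$ to handle the $\om_r^{\al-2}$ term in $H'$, use the vector field $Z$ in the Rellich computation for $D'$, complete the square, and finish with Cauchy--Schwarz; the organizational difference that you absorb the error $\mathcal{E}_1 = \al E_H$ into $I(r)$ instead of peeling it off (the paper keeps $I(r) = 2\al\int u\, A\gr u\cdot x\,\om_r^{\al-1}$ and a separate $E_H$) is harmless, since the extra cross term it generates in $J^2$ is still of order $\eta N$. One genuine slip: your completed-square integrand should carry $A\gr u\cdot x$, not $Z\cdot\gr u$. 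The Rellich identity produces $4\al\int (A\gr u\cdot x)^2\mu^{-1}\om_r^{\al-1} - 2\int (A\gr u\cdot x)\,\di(A\gr u)\,\mu^{-1}\om_r^{\al}$ (the factor $\mu^{-1}$ already comes from $Z$), so the correct square is
\[
G(r) = \int_{B_r}\Bigl(A\gr u\cdot x - \tfrac{\di(A\gr u)\,\om_r}{4\al}\Bigr)^2\mu^{-1}\om_r^{\al-1},
\]
and this is also what is needed so that the Cauchy--Schwarz pairing of $\abs{u}^2\mu\,\om_r^{\al-1}$ against $G$ closes with cross term $J - \tfrac12\mathcal{E}_1 = 2\al\int u\bigl(A\gr u\cdot x - \tfrac{\di(A\gr u)\om_r}{4\al}\bigr)\om_r^{\al-1}$. (With $Z\cdot\gr u = \mu^{-1}A\gr u\cdot x$ substituted, the weights do not balance.) Also, only one Cauchy--Schwarz is used; the $L(r)^2$ term is simply dropped by positivity, not paired. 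Finally, the detour through $\LP(\abs{u}^2\mu)$ risks the ill-defined $\LP\mu$ for merely Lipschitz $A$ --- the paper sidesteps this by exploiting $\mu\abs{x}^2 = Ax\cdot x$ directly at the $\om_r^{\al-2}$ stage so that the integration by parts lands on $\di(\abs{u}^2 Ax)$ and never touches a second derivative of $A$.
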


\begin{proof}
With $H(r)$ as given, repeating the arguments in the proof of Lemma \ref{derivLem} shows that
\begin{equation}
\label{HADerivComp}
\begin{aligned}
H'(r) - \frac{2 (\al-1)}{r} H(r) 
&=  \frac{2(\al-1)}{r} \int_{B_r} \abs{u}^2 \abs{x}^2 \mu \, \om_r^{\al - 2}
=  \frac{1}{r} \int_{B_r} \abs{u}^2 A x \cdot 2(\al-1) x \, \om_r^{\al - 2} \\
&=  \frac{1}{r} \int_{B_r} \di \pr{\abs{u}^2 A x} \om_r^{\al -1} \\
&=  \frac{n}{r} H(r)
+ \frac{1}{\al r} I(r)
+ \frac{1}{r} \int_{B_r} \abs{u}^2  \brac{\di \pr{A x} - n \mu} \om_r^{\al-1} ,
\end{aligned}
\end{equation}
where another integration by parts shows that
\begin{equation}
\label{IADefn}
\begin{aligned}
I(r) &:= 2 \al \int_{B_r} u A \gr u \cdot x \, \om_r^{\al-1} 
= \int_{B_r} A \gr u \cdot \gr u \, \om_r^{\al}
+ \int_{B_r} u \di \pr{A \gr u} \om_r^{\al} \\
&= D(r) + L(r),
\end{aligned}
\end{equation}
where we introduce
\begin{align}
\label{LAdefn}
&L(r) := \int_{B_r} u \di \pr{A \gr u} \om_r^{\al}.
\end{align}
With
\begin{align}
\label{EHdefn}
&E_H(r) := \int_{B_r} \abs{u}^2  \brac{\di \pr{A x}\mu^{-1} - n} \mu \om_r^{\al-1},
\end{align}
we see that
\begin{equation}
\label{HADeriv}
H'(r) 
= \frac{2 (\al-1) + n}{r} H(r)
+ \frac{D(r) + L(r)}{\al r}
+ \frac{1}{r} E_H(r).
\end{equation}
Since Lemma \ref{AestLemma}  is applicable, then \eqref{diAnmu} shows that
\begin{equation}
\label{EHBound}
E_H(r) \le c_1 \eta r H(r).
\end{equation}

Now we look at the derivative of $D(r)$.
As in \cite{GPSVG18}, we use the vector field $\disp Z(x) = \frac{A x}{\mu(x)}$ from \eqref{ZDefn} and observe that $Z(x) \cdot x = \abs{x}^2$.
Repeating the arguments in the proof of Lemma \ref{derivLem} shows that
\begin{equation}
\label{DA'1}
\begin{aligned}
D'(r) - \frac{2 \al}{r} D(r)
&=  \frac {2\al} r \int_{B_r} A \gr u \cdot \gr u \abs{x}^2 \om_r^{\al-1}
=  \frac {2\al } r \int_{B_r} A \gr u \cdot \gr u \, Z \cdot x \, \om_r^{\al-1} \\
&=  \frac {1} r \int_{B_r}  \di Z A \gr u \cdot \gr u \, \om_r^{\al} 
+  \frac {1} r \int_{B_r}  \gr \pr{A \gr u \cdot \gr u} \cdot Z \, \om_r^{\al}.
\end{aligned}
\end{equation}
For the last term, an integration by parts shows that
\begin{align*}
\int_{B_r} \del_k \pr{a_{ij} \del_i u \del_j u} Z_k \om_r^{\al}
=& \int_{B_r} \del_k a_{ij} \del_i u \del_j u Z_k \om_r^{\al} 
+ \int_{B_r} a_{ij} \pr{\del_{ik} u \del_j u + \del_i u \del_{jk} u} Z_k \om_r^{\al} \\
=& \int_{B_r} \del_k a_{ij} \del_i u \del_j u Z_k \om_r^{\al} 
- 2 \int_{B_r} Z_k \del_{k} u \del_i\pr{a_{ij} \del_j u} \om_r^{\al} \\
-& 2 \int_{B_r} a_{ij} \del_{k} u \del_j u \del_i Z_k \om_r^{\al}
+ 4\al \int_{B_r} a_{ij} \del_j u x_i Z_k \del_{k} u \, \om_r^{\al-1}.
\end{align*}
Therefore,
\begin{align*}
\int_{B_r} & \gr \pr{A \gr u \cdot \gr u} \cdot Z \, \om_r^{\al} 
= - 2 \int_{B_r} A \gr u \cdot \gr u \, \om_r^{\al}
+ 4\al \int_{B_r} \pr{A \gr u \cdot x}^2 \mu^{-1} \om_r^{\al-1} \\
-& 2 \int_{B_r} A \gr u \cdot x \di\pr{A \gr u} \mu^{-1} \om_r^{\al} 
 + 2 \int_{B_r} a_{ij} \del_{k} u \del_j u \pr{\de_{ik} - \del_i Z_k} \om_r^{\al}
+ \int_{B_r} \del_k a_{ij} \del_i u \del_j u Z_k \om_r^{\al}.
\end{align*}
Substituting this expression into \eqref{DA'1} shows that
\begin{equation}
\label{DAderiv}
\begin{aligned}
D'(r)
&= \frac{2(\al-1)  + n}{r} D(r)
+ \frac{4\al }{r} \int_{B_r} \pr{A \gr u \cdot x}^2 \mu^{-1} \om_r^{\al-1}  \\
&- \frac{2}{r} \int_{B_r} A \gr u \cdot x \di\pr{A \gr u} \mu^{-1} \om_r^{\al}
+ \frac {1} r E_D(r) \\
&= \frac{2\al  + n-2}{r} D(r)
+ \frac{4\al }{r} \int_{B_r}\brac{ \pr{A \gr u \cdot x}^2 - \frac{\di\pr{A \gr u} \om_r}{4 \al}  }^2 \mu^{-1} \om_r^{\al-1}  \\
&- \frac{1}{4\al r} \int_{B_r}\brac{  \di\pr{A \gr u} }^2 \mu^{-1} \om_r^{\al+1} 
+ \frac {1} r E_D(r),
\end{aligned}
\end{equation}
where we introduce
\begin{equation}
\label{EDdefn}
\begin{aligned}
E_D(r) 
&:= \int_{B_r}  \brac{\di Z - n} A \gr u \cdot \gr u \, \om_r^{\al}
+ 2 \int_{B_r} a_{ij} \del_{k} u \del_j u \pr{\de_{ik} - \del_i Z_k} \om_r^{\al} \\
&+ \int_{B_r} \del_k a_{ij} \del_i u \del_j u Z_k \om_r^{\al}.
\end{aligned}
\end{equation}
Applications of \eqref{LipCond} as well as \eqref{delZkcomp} and \eqref{divZest} from Lemma \ref{AestLemma} show that for some $c(n, \la) > 0$,
\begin{equation}
\label{EDBound}
\abs{E_D(r)}
\le c \eta r D(r).
\end{equation}

Now set
$$J(r) := \frac {I(r) + D(r)} 2 = 2 \al \int_{B_r} u \brac{ A \gr u \cdot x - \frac{\di\pr{A \gr u} \om_r}{4\al } } \om_r^{\al-1}$$ 
so that with $L(r)$ as in \eqref{LAdefn}, we get
\begin{equation}
\label{JArelat}
\begin{aligned}
D(r) &= J(r) - \frac 1 2 L(r) \\
I(r) &= J(r) + \frac 1 2 L(r).
\end{aligned}
\end{equation}
Using \eqref{HDADefn}, \eqref{HADeriv}, \eqref{DAderiv}, and \eqref{JArelat}, we see that
\begin{align*}
H^2(r) N'(r) 
&= D'(r) H(r) - H'(r) D(r) \\
&= \frac{2\al  + n-2}{r} D(r) H(r)
+ \frac{4\al }{r} H(r) \int_{B_r}\brac{ \pr{A \gr u \cdot x}^2 - \frac{\di\pr{A \gr u} \om_r}{4\al}   }^2 \mu^{-1} \om_r^{\al-1}  \\
&- \frac{1}{4\al r} H(r) \int_{B_r}\brac{  \di\pr{A \gr u} }^2 \mu^{-1} \om_r^{\al+1} 
+ \frac {1} r H(r) E_D(r) \\
&- \frac{2 \al + n-2}{r} H(r) D(r)
- \frac{J(r) + \frac 1 2 L(r)}{\al r} \pr{J(r) - \frac 1 2 L(r)}
- \frac{1}{r} E_H(r)D(r) \\
&= \frac{4 \al }{r} \int_{B_r} \abs{u}^2  \mu \, \om_r^{\al-1} \int_{B_r}\brac{ \pr{A \gr u \cdot x}^2 - \frac{\di\pr{A \gr u} \om_r}{4 \al }   }^2 \mu^{-1} \om_r^{\al-1}  \\
&- \frac{4 \al }{r} \set{ \int_{B_r} u \brac{ A \gr u \cdot x - \frac{\di\pr{A \gr u} \om_r}{4 \al}  } \om_r^{\al-1}}^2 \\
&+ \frac{1}{4 \al r} \brac{\int_{B_r} u \di \pr{A \gr u}  \om_r^{\al}}^2 
- \frac{1}{4 \al r} H(r) \int_{B_r}\brac{  \di\pr{A \gr u} }^2 \mu^{-1} \om_r^{\al+1} \\
&+ \frac {1} r H(r) E_D(r) 
- \frac{1}{r} E_H(r)D(r) .
\end{align*}
Using Cauchy-Schwarz, \eqref{EHBound}, and \eqref{EDBound}, we conclude that there exists $c_0(n, \la) > 0$ so that \eqref{NA'Bound} holds.
\end{proof}

{In the next lemma, we prove a couple of inequalities that will be used below in our monotonicity result and our three-ball inequality.
}

\begin{lemma}[More variable-coefficient estimates]
\label{someBoundsLemma}
Let $A$ be a symmetric, bounded, uniformly elliptic matrix-valued function with Lipschitz continuous coefficients that satisfies \eqref{symm} -- \eqref{LipCond}, and $A(0) = I$.
Assume that $u$ is a solution to $- \di\pr{A \gr u} + W \cdot \gr u + V u= 0$ in $B_R$, where $\norm{W}_{L^\iny(B_R)} \le K$ and $\norm{V}_{L^p\pr{B_R}} \le M$ for some $p \in \brac{n, \iny}$.
Then there exist constants $c_4(n, \la, p), c_5(n, \la, p) > 0$ so that
\begin{align}
\abs{L(r)} &\le c_4 \brac{ M^\ga r^2 + \pr{Kr}^2 + \al \pr{1  + \eta r} }H(r)
+ \frac 1 2 D(r)
\label{LAbound} 
\end{align}
and for {$\be \begin{cases} \text{ arbitrary in } \in [0,1] & \text{ if } p \in (n, \iny] \\ = 1 & \text{ if } p = n \end{cases}$},
\begin{equation}
\label{intdivAGrbound}
\begin{aligned}
\int_{B_r} \abs{\di \pr{A \gr u}}^2 \mu^{-1} \om_r^{\al + 1} 
&\le c_5 \pr{M^{\ga} r^2 }^\be \brac{\pr{M^{\ga} r^2 }^{1 + \frac{1-\be}{1 - \frac n p}}  + \pr{Kr}^2 + \al \pr{1  + \eta r} } H(r) \\
&+ c_5 \brac{\pr{M^{\ga} r^2 }^\be + K^2 r^2}  D(r)
\end{aligned}
\end{equation}
where we introduce $\ga = \frac{2p}{2p-n} \in [1, 2]$.
{If $p = n$, then $\frac{1 - \be}{1 - \frac n p} = 0$.}
\end{lemma}

\begin{proof}
Observe that
\begin{equation*}
\begin{aligned}
\int_{B_r}  A \gr\pr{u \, \om_r^{\frac{\al}2}} \cdot \gr\pr{u \, \om_r^{\frac{\al}2}}
&= \int_{B_r}  A \pr{\gr u \, \om_r -  \al u x  } \cdot \pr{\gr u \, \om_r - \al u x  } \om_r^{\al - 2} \\
&=\int_{B_r}   A \gr u \cdot \gr u \, \om_r^{\al}
- 2 \al \int_{B_r}  u A \gr u \cdot x \om_r^{\al-1} 
+ \al^2 \int_{B_r}  \abs{  u}^2 A x \cdot x \om_r^{\al - 2} \\
&= D(r) - I(r) + \al^2 \int_{B_r}  \abs{  u}^2 A x \cdot x \om_r^{\al - 2},
\end{aligned}
\end{equation*}
where we recall the definitions in \eqref{HDADefn} and \eqref{IADefn}.
An integration by parts followed by the computation from \eqref{HADerivComp} and the notation from \eqref{EHdefn} shows that
\begin{align*}
2(\al - 1) \int_{B_r} \abs{  u}^2 A x \cdot x \, \om_r^{\al - 2}
&= - \int_{B_r} \abs{  u}^2 A x \cdot \gr \om_r^{\al - 1}
= \int_{B_r} \di\pr{ \abs{  u}^2 A x} \om_r^{\al - 1} \\
&= n H(r)
+ E_H(r)
+ \frac 1 \al I(r).
\end{align*}
Putting these together gives
\begin{equation*}
\begin{aligned}
\int_{B_r} A \gr\pr{u \, \om_r^{\frac{\al}2}} \cdot \gr\pr{u \, \om_r^{\frac{\al}2}}
&= D(r) - I(r) 
+ \frac{\al^2}{2(\al - 1)} 2(\al - 1) \int_{B_r}  \abs{  u}^2 A x \cdot x \om_r^{\al - 2} \\
&= D(r) - I(r)  + \frac{\al^2}{2(\al - 1)} \brac{n H(r) + E_H(r) + \frac 1 \al I(r)} \\
&= D(r) + \pr{\frac{\al}{2(\al - 1)}-1} I(r) + \frac{\al^2}{2(\al - 1)} \brac{n H(r) + E_H(r)} \\
&= \frac{\al^2}{2(\al - 1)} \brac{n H(r) + E_H(r)}
+ \frac{\al}{2(\al - 1)} D(r) 
- \frac{\al - 2}{2(\al - 1)} L(r),
\end{aligned}
\end{equation*}
where we have used the relation in \eqref{IADefn}.
Applications of \eqref{EHBound} and \eqref{ellip} show that
\begin{equation}
\label{gradEst}
\int_{B_r} \abs{\gr\pr{u \, \om_r^{\frac{\al}2}}}^2
\le \frac{\al}{2\la (\al - 1)} \brac{ \al \pr{n  + c_1 \eta r} H(r) +  D(r) + \abs{L(r) }}.
\end{equation}

From \eqref{LAdefn} {and the equation for $u$}, we see that
\begin{align*}
L(r) 
&= \int_{B_r} u \di\pr{A \gr u} \om_r^{\al}
= \int_{B_r} V \abs{u}^2 \om_r^{\al}
+ \int_{B_r} u W \cdot \gr u \, \om_r^{\al}.
\end{align*}
{Since $W \in L^\iny$,} an application of Cauchy-Schwarz shows that
\begin{align*}
\int_{B_r} u W \cdot \gr u \, \om_r^{\al}
&\le K r \pr{\int_{B_r} \abs{u}^2 \om_r^{\al -1}}^{\frac 1 2} \pr{\int_{B_r} \abs{\gr u}^2 \om_r^{\al}}^{\frac 1 2} 
\le \frac{K r}{\la} H(r)^{\frac 1 2} D(r)^{\frac 1 2} \\
&\le 2\pr{\frac{Kr}{\la}}^2 H(r) + \frac 1 8 D(r).
\end{align*}
{If $V \in L^\iny$, then
\begin{align*}
\int_{B_r} V \abs{u}^2 \om_r^{\al}
&\le M r^2 \la^{-1} H(r) 
\end{align*}
so that
\begin{align*}
\abs{L(r) }
&\le \brac{M r^2 \la^{-1} + 2\pr{\frac{Kr}{\la}}^2}H(r) + \frac 1 8 D(r),
\end{align*}
which implies \eqref{LAbound}.
}
On the other hand, {with $V \in L^p$ for $p \in [n, \iny)$, we may interpolate to get}
\begin{align*}
\int_{B_r} V \abs{u}^2 \om_r^{\al}
&= \int_{B_r} V \om_r^{1 - \frac n {2p}} \pr{\abs{u}^2  \om_r^{\al-1}}^{1 - \frac n {2p}} \pr{\abs{u}^2  \om_r^{\al}}^{\frac n {2p}}  \\
&\le \pr{\int_{B_r}\abs{ V}^p}^{\frac 1 p} r^{2 - \frac {n} {p}}   \pr{\int \abs{u}^2  \om_r^{\al-1} }^{1 - \frac n {2p}} \brac{ \int_{B_r} \pr{u  \om_r^{\frac{\al}2}}^{\frac {2n}{n-2}}}^{\frac {n-2} {2p}} \\
&\le M r^{2 - \frac {n} {p}} \brac{\la^{-1} H(r)}^{1 - \frac n {2p}} \brac{c_s \int_{B_r} \abs{\gr\pr{u \om_r^{\frac{\al}2}}}^2 }^{\frac n {2p}} \\
&\le \la^{\frac n {2p} - 1} M r^{2 - \frac {n} {p}} H(r)^{1 - \frac n {2p}} \set{\frac{c_s \al}{2\la (\al - 1)} \brac{ \al \pr{n  + c_1 \eta r} H(r) +  D(r) + \abs{L(r) }}}^{\frac n {2p}} \\
&\le \brac{(2c_s)^{\frac n {2p-n}} \la^{-\ga} M^\ga r^2 H(r)}^{1 - \frac n {2p}} \brac{\al \frac{n  + c_1 \eta r} 2 H(r) +  \frac 1 2 D(r) + \frac 1 2 \abs{L(r) }}^{\frac n {2p}} \\
&\le \pr{1 - \frac n {2p}}\brac{(2c_s)^{\frac n {2p-n}} \la^{-\ga} M^\ga r^2 H(r)} + {\frac n {2p}} \brac{\al \frac{n  + c_1 \eta r} 2 H(r) +  \frac 1 2 D(r) + \frac 1 2 \abs{L(r) }},
\end{align*}
where we have used H\"older's inequality, a Sobolev embedding, \eqref{gradEst}, and Young's inequality.
That is,
\begin{align*}
\abs{L(r)} 
&\le \brac{(2c_s)^{\frac n {2p-n}} \pr{\frac M \la}^\ga r^2 + 2\pr{\frac{Kr}{\la}}^2 + \al \frac{n  + c_1 \eta r} 4}H(r) 
+ \frac 3 8 D(r)
+ \frac 1 {4}  \abs{L(r)}.
\end{align*}
Absorbing $\abs{L(r)}$ into the left and simplifying shows that \eqref{LAbound} holds.

Now
\begin{align*}
\int_{B_r}\abs{ \di\pr{A \gr u} }^2 \mu^{-1} \om_r^{\al+1}
&= \int_{B_r}\abs{ V u + W \cdot \gr u}^2 \mu^{-1}  \om_r^{\al+1} \\
&\le \frac 2 \la \int_{B_r}\abs{ V u}^2 \om_r^{\al+1}
+ \frac 2 \la \int_{B_r}\abs{W \cdot \gr u}^2 \om_r^{\al+1},
\end{align*}
where
\begin{align*}
\int_{B_r}\abs{W \cdot \gr u}^2 \om_r^{\al+1}
\le \frac{K^2 r^2}{\la} D(r).
\end{align*}
For the other term, we {proceed as we did above with the other term involving $V$; we interpolate, then} use H\"older, Sobolev, and \eqref{gradEst} { to reach our bound.
Since $V \in L^p$ for $p \ge n$, the following interpolation is valid, but wouldn't work for any value of $p < n$.}
We get that
\begin{equation*}
\begin{aligned}
\int_{B_r}\abs{V u}^2 \om_r^{\al+1}
&= \int_{B_r}\abs{ V}^2 \om_r^{2 - \frac{n}{p}} \pr{\abs{u}^2  \om_r^{\al-1}}^{1 - \frac n {p}} \pr{\abs{u}^2  \om_r^{\al}}^{\frac n {p}} \\
&\le \pr{\int_{B_r}\abs{ V}^p}^{\frac 2 p} r^{4 - \frac{2n}{p}} \pr{\int \abs{u}^2  \om_r^{\al-1} }^{1 - \frac n {p}} \brac{ \int_{B_r} \pr{u  \om_r^{\frac{\al}2}}^{\frac {2n}{n-2}}}^{\frac {n-2} {p}} \\
&\le M^2 r^{4 - \frac{2n}{p}} \brac{\la^{-1} H(r)}^{1 - \frac n p} \brac{c_s \int_{B_r} \abs{\gr\pr{u \om_r^{\frac{\al}2}}}^2}^{\frac n p} \\
&= c_s^{\frac n p} \la^{\frac n p - 1} M^2 r^{4 - \frac{2n}{p}} H(r)^{1 - \frac n p} \brac{  \frac{\al}{2\la (\al - 1)} \brac{ \al \pr{n  + c_1 \eta r} H(r) +  D(r) + \abs{L(r) }}}^{\frac n p} \\
&\le C \pr{M^\ga r^2}^{2 - \frac{n}{p}} H(r)^{1 - \frac n p} \set{ \brac{ M^\ga r^2 + \pr{Kr}^2 + \al \pr{1  + \eta r} }H(r) +  D(r)}^{\frac n p} ,
\end{aligned}
\end{equation*}
where the last {line} uses {the definition of $\ga$ and} the bound for $L(r)$ {in \eqref{LAbound}}.
{If $p = n$, then
\begin{align*}
\int_{B_r}\abs{ \di\pr{A \gr u} }^2 \mu^{-1} \om_r^{\al+1}
&\le C M^\ga r^2 \brac{ M^\ga r^2 + K^2 r^2 + \al \pr{1  + \eta r} }H(r) \\
&+ C\brac{ M^\ga r^2 +  K^2 r^2} D(r),
\end{align*}
which gives \eqref{intdivAGrbound}.
If $p > n$, then for any $\be \in [0,1]$, we write 
$$2 - \frac n p = \be + 1 - \frac n p + 1 - \be = \be + \pr{1 - \frac n p}\pr{1 + \frac{1 -\be}{1 - \frac n p}}$$ 
and then}
\begin{align*}
\int_{B_r}\abs{V u}^2 \om_r^{\al+1}
&\le C  \pr{M^\ga r^2}^{2 - \frac{n}{p}}  H(r)^{1 - \frac n p} \set{ \brac{ M^\ga r^2 + \pr{Kr}^2 + \al \pr{1  + \eta r} }H(r) +  D(r)}^{\frac n p} \\
&= C \pr{M^{\ga} r^2 }^\be \brac{\pr{M^{\ga} r^2 }^{1 + \frac{1-\be}{1 - \frac n p}}  H(r)}^{1 - \frac n p} \\
&\times\set{ \brac{ M^\ga r^2 + \pr{Kr}^2 + \al \pr{1  + \eta r} }H(r) +  D(r)}^{\frac n p} \\
&\le C \pr{M^{\ga} r^2 }^\be \set{\brac{\pr{M^{\ga} r^2 }^{1 + \frac{1-\be}{1 - \frac n p}}  + \pr{Kr}^2 + \al \pr{1  + \eta r} }H(r) +  D(r)}.
\end{align*}
In combination with the bound on the gradient term, we reach the conclusion described by \eqref{intdivAGrbound}.
\end{proof}

If $u$ is a solution to a Schr\"odinger equation, we get the following monotonicity result.

\begin{cor}[Variable-coefficient monotonicity result]
\label{monoCorA}
Let $A$ be a symmetric, bounded, uniformly elliptic matrix-valued function with Lipschitz continuous coefficients that satisfies \eqref{symm} -- \eqref{LipCond}, and $A(0) = I$.
Assume that $u$ is a solution to $- \di\pr{A \gr u} + W \cdot \gr u + V u= 0$ in $B_R$, where $\norm{W}_{L^\iny(B_R)} \le K$ and $\norm{V}_{L^p\pr{B_R}} \le M$ for some $p \in \brac{n, \iny}$.
Then there exist constants, $C_1(n, \la, p), C_2(n, \la, p)$ so that with $N(r)$ as in \eqref{NADefn} and {$\be \begin{cases} \text{ arbitrary in } \in [0,1] & \text{ if } p \in (n, \iny] \\ = 1 & \text{ if } p = n \end{cases}$}, it holds that
\begin{equation*}
\widetilde N(r) := \set{ N(r) + \frac{C_2}{\al} \pr{M^{\ga} r^2 }^\be \brac{\pr{M^{\ga} r^2 }^{1 + \frac{1-\be}{1 - \frac n p}}  + \pr{Kr}^2 + \al\pr{1 + \eta r}} } e^{c_0 \eta r + \frac{C_1} \al \brac{ \pr{M^{\ga} r^2 }^\be + K^2 r^2 } },
\end{equation*}
is a non-decreasing function of $r$.
{Here $\al \ge 2$, $\ga = \frac{2p}{2p-n}$, and $\frac{1 - \be}{1 - \frac n p} = 0$ when $p = n$.}
\end{cor}

\begin{proof}
Since$- \di\pr{A \gr u} + W \cdot \gr u + V u= 0$ in $B_R$, then we may use \eqref{intdivAGrbound} and \eqref{NA'Bound} from Lemma \ref{monoLemmaA} to get
\begin{align*}
N'(r) + \brac{c_0\eta +  c_5 \frac{ \pr{M^{\ga} r^2 }^\be + \pr{Kr}^2 }{4\al r} } N(r)
\ge - c_5 \frac{ \pr{M^{\ga} r^2 }^\be \brac{\pr{M^{\ga} r^2 }^{1 + \frac{1-\be}{1 - \frac n p}}  + \pr{Kr}^2 + \al \pr{1  + \eta r} } }{4 \al r }.
\end{align*}
The result follows from an integration.
\end{proof}

Using the monotonicity result above and choosing $\al \gg 1$ leads to the following three-balls inequality.

\begin{prop}[Variable-coefficient three-balls inequality]
\label{threeBallsA}
Let $A : B_R \to \mathbb{M}_n$ be a symmetric, bounded, uniformly elliptic matrix-valued function with Lipschitz continuous coefficients that satisfies \eqref{symm} -- \eqref{LipCond}, and $A(0) = I$.
Let $u$ be a solution to $- \di\pr{A \gr u} + W \cdot \gr u + V u= 0$ in $B_R$, where $\norm{W}_{L^\iny(B_R)} \le K$ and $\norm{V}_{L^p\pr{B_R}} \le M$ for some $p \in \brac{n, \iny}$.
Then for any $0 < r_1 < r_2 < \si r_2 < r_3 < R$, {where $\si \in \pr{1, \frac {r_3}{r_2}}$ is a constant,} it holds that
\begin{align*}
\norm{u}_{r_2}
\le \frac{e^{C \eta R}}{\la^2} \exp\set{ \brac{\pr{M^{\ga} R^2 }^\be + K^2 R^2 + C}\brac{C + 2  \log\pr{\frac{r_3}{\si r_2}} + \log\pr{\frac{\si^2}{\si^2 -1}}} } 
\norm{u}_{r_1}^{\kappa} \norm{u}_{r_3}^{1 - \kappa},
\end{align*}
where $\norm{u}_r := \norm{u}_{L^2(B_r)}$, $C(n, \la, p) > 0$, $\be = \frac{2p - n}{3p - 2n}$, $\ga = \frac{2p}{2p-n}$, and 
$$\kappa = \frac{ \log r_3 - \log\pr{\si r_2}}{\log r_3 - \log\pr{\si r_2} + 3 e^{c_0 \eta R + C_1} \brac{\log\pr{\si r_2} - \log r_1}},$$ 
with $c_0(n, \la) > 0$ from Lemma \ref{monoLemmaA} and $C_1(n, \la, p) > 0$ from Corollary \ref{monoCorA}. 
\end{prop}

\begin{remark}
If $A \equiv I$, then the above result holds with $\la = 1$ and $\eta = 0$. 
\end{remark}

\begin{proof}
From \eqref{HADeriv}, we have
\begin{align*}
\frac{H'(r) }{H(r)}
&= \frac{2 (\al -1) + n}{r} + \frac {N(r)} {\al r} + \frac{1}{\al r}\frac{L(r)}{H(r)} + \frac{E_H(r)}{r H(r)},
\end{align*}
where we recall \eqref{NADefn}. 
{Set $\be_1 = 1 + \frac{1-\be}{1 - \frac n p} \ge 1$, where $\be \in (0, 1]$ is to be determined for $p \ne n$, and $\be_1 = \be = 1$ when $p = n$.}
Applications of \eqref{EHBound} and \eqref{LAbound} 
show that
\begin{equation}
\label{logH'ALow}
\begin{aligned}
\frac{H'(r) }{H(r)}
&\ge \frac{2 \al + n - 2 - c_4}{r} 
+ \frac {N(r) + \frac{C_2}{\al} \pr{M^{\ga} r^2 }^\be \brac{\pr{M^{\ga} r^2 }^{{\be_1}}  + \pr{Kr}^2 + \al\pr{1 + \eta r}}} {2\al r} \\
&- \frac {C_2  \pr{M^{\ga} r^2 }^\be \brac{\pr{M^{\ga} r^2 }^{{\be_1}}  + \pr{Kr}^2 + \al\pr{1 + \eta r}}} {2\al^2 r}
- \frac{c_4 \brac{ M^\ga r^2 + \pr{Kr}^2} }{\al r} 
- c_6 \eta
\end{aligned}
\end{equation}
and
\begin{equation}
\label{logH'AUp}
\begin{aligned}
\frac{H'(r) }{H(r)}
&\le \frac{2 \al + n - 2 + c_4}{r} + 3 \frac {N(r) + \frac{C_2}{\al} \pr{M^{\ga} r^2 }^\be \brac{\pr{M^{\ga} r^2 }^{{\be_1}}  + \pr{Kr}^2 + \al\pr{1 + \eta r}}} {2\al r}
 \\
&+ \frac{c_4 \brac{ M^\ga r^2 + \pr{Kr}^2 } }{\al r} + c_6 \eta ,
\end{aligned}
\end{equation}
where $c_6 = c_1 + c_4$.
If we assume that 
\begin{equation}
\label{alADefn}
\al \ge {2 + } \pr{M^{\ga} R^2 }^\be + K^2 R^2,
\end{equation}
then an application of Corollary \ref{monoCorA} implies that for all $r \in \brac{\si r_2, r_3} $,
\begin{equation}
\label{tildeNALow}
\begin{aligned}
N(r) + \frac{C_2}{\al} \pr{M^{\ga} r^2 }^\be \brac{\pr{M^{\ga} r^2 }^{{\be_1}}  + \pr{Kr}^2 + \al\pr{1 + \eta r}}
&=\widetilde N(r) e^{- c_0 \eta r - \frac{C_1}{\al} \brac{ \pr{M^{\ga} r^2 }^\be + K^2 r^2 } } \\
&\ge \widetilde N(\si r_2) e^{-C_3},
\end{aligned}
\end{equation}
where we introduce $C_3 := c_0 \eta R + C_1$.
Similarly, for all $r \in \brac{r_1, \si r_2}$, we have
\begin{align}
\label{tildeNAUp}
N(r) + \frac{C_2}{\al} \pr{M^{\ga} r^2 }^\be \brac{\pr{M^{\ga} r^2 }^{{\be_1}}  + \pr{Kr}^2 + \al\pr{1 + \eta r}}
&\le \widetilde N(\si r_2).
\end{align}
{If we choose $\be = \frac{2p - n}{3p - 2n}$, then $\be_1 = 1 + \frac{1-\be}{1 - \frac n p} = \frac{4p - 2n}{3p- 2n} = 2 \be$ for $p > n$ and $\be_1 = 1 \le 2 \be$ for $p = n$.
As we'll see below, this choice of $\be$ will make the choice of $\al$ in \eqref{alADefn} optimal.}
Using \eqref{tildeNALow} and \eqref{alADefn} in \eqref{logH'ALow} shows that for all $r \in \brac{\si r_2, r_3}$,
\begin{equation}
\label{logH'ALow2}
\begin{aligned}
\frac{H'(r) }{H(r)}
&\ge \frac{2 \al + n - 2 }{r} 
+ \frac {\widetilde{N}(r)} {2 e^{C_3} \al r} 
- C\brac{\frac 1  r +  \frac {\pr{M^{\ga} r^2 }^{\be}  } {r} + \eta}.
\end{aligned}
\end{equation}
Similarly, using \eqref{tildeNAUp} and \eqref{alADefn} in \eqref{logH'AUp} shows that for all $r \in \brac{r_1, \si r_2}$,
\begin{equation}
\label{logH'AUp2}
\begin{aligned}
\frac{H'(r) }{H(r)}
&\le \frac{2 \al + n - 2}{r} 
+ \frac {3\widetilde N(\si r_2)} {2\al r}
+ C\brac{\frac 1 r +  \frac {\pr{M^{\ga} r^2 }^{\be}  } {r} + \eta}.
\end{aligned}
\end{equation}
Integrating \eqref{logH'ALow2} from $\si r_2 < r_3 < R$ gives
\begin{align*}
\log\brac{\frac{H(r_3)}{H(\si r_2)}}
&= \int_{\si r_2}^{r_3} \frac{H'(r) }{H(r)} dr 
\ge  \int_{\si r_2}^{r_3} \set{\frac{2 \al + n - 2}{r} 
+ \frac {\widetilde{N}(r)} {2 e^{C_3} \al r} 
- C\brac{\log r +  \frac {\pr{M^{\ga} r^2 }^{\be}  } {r} + \eta}} dr
  \\
&\ge \frac{2 \al + n -2 + \frac {\widetilde N(\si  r_2)}{\al} }{2 e^{C_3}} \log\pr{\frac{r_3}{\si r_2}} 
 - C \brac{  \log\pr{\frac{r_3}{\si r_2}}  + \pr{M^{\ga} R^2 }^{\be} + \eta R}.
\end{align*}
Integrating \eqref{logH'AUp2} from $\si r_2 < r_3 < R$ gives
\begin{align*}
\log\brac{\frac{H(\si r_2)}{H(r_1)}}
&\le \int_{r_1}^{\si r_2} \set{\frac{2 \al + n - 2}{r} 
+ \frac {3\widetilde N(\si r_2)} {2\al r}
+ C\brac{\log r +  \frac {\pr{M^{\ga} r^2 }^{\be}  } {r} + \eta}} dr 
 \\
&\le \frac 3 2 \brac{2 \al + n -2 + \frac {\widetilde N(\si r_2)} {\al}} \log\pr{\frac{\si r_2}{r_1}}
+ C \brac{  \log\pr{\frac{\si r_2}{r_1}} 
+ \pr{M^{\ga} R^2 }^{\be}
+ \eta R}.
\end{align*}
Combining these inequalities shows that
\begin{align*}
\frac{\log\brac{\frac{H(\si r_2)}{H(r_1)}} - C \brac{  \log\pr{\frac{\si r_2}{r_1}} + \pr{M^{\ga} R^2 }^{\be} + \eta R}}{3 e^{C_3} \log\pr{\frac{\si r_2}{r_1}}}
&\le \frac {2 \al + n -2 + \frac {\widetilde N(\si r_2)} {\al}}{2e^{C_3}}  \\
&\le \frac{\log\brac{\frac{H(r_3)}{H(\si r_2)}} + C \brac{  \log\pr{\frac{r_3}{\si r_2}}  + \pr{M^{\ga} R^2 }^{\be} + \eta R}}{\log\pr{\frac{r_3}{\si r_2}} }.
\end{align*}

With $h(r)$ as in \eqref{hDefn}, observe that $H(r) \le \la^{-1} r^{2(\al-1)} h(r)$, while for any $0 < r < \rho \le 1$, $\disp h(r) \le \la^{-1} \frac{H(\rho)}{\pr{\rho^2 - r^2}^{\al-1}}$.
In particular, 
$$H(\si r_2) \ge \la \brac{\pr{\si^2 - 1} r_2^2}^{\al -1} h(r_2) = \la \pr{\frac {\si^2 -1}{\si^2}}^{\al -1} \pr{\si r_2}^{2(\al-1)} h(r_2).$$
Therefore,
\begin{align*}
\log\brac{\frac{H(r_3)}{H(\si r_2)}}
&\le \log\brac{\frac{\la^{-2} r_3^{2(\al-1)} h(r_3)}{\pr{\frac {\si^2 -1}{\si^2}}^{\al -1} \pr{\si r_2}^{2(\al-1)}  h(r_2)}}  \\
&= \log\brac{\frac{h(r_3)}{h(r_2)}} + (\al-1) \brac{2\log\pr{\frac{r_3}{\si r_2}} + \log\pr{\frac{\si^2}{\si^2 -1}}} - 2 \log \la
\end{align*}
and
\begin{align*}
\log\brac{\frac{H(\si r_2)}{H(r_1)}}
&\ge \log\brac{\frac{\la^2 \pr{\frac{\si^2 -1}{\si^2}}^{\al -1} \pr{\si r_2}^{2(\al-1)} h(r_2)}{r_1^{2(\al-1)} h(r_1)}} \\
&= \log\brac{\frac{h(r_2)}{h(r_1)}} + (\al-1) \brac{2\log\pr{\frac{\si r_2}{r_1}} - \log\pr{\frac {\si^2}{\si^2 -1}}} + 2 \log \la.
\end{align*}
Substituting these bounds into the expression above gives
\begin{align*}
&\frac{\log\brac{\frac{h(r_2)}{h(r_1)}} + (2\al-2 - C)\log\pr{\frac{\si r_2}{r_1}}  - (\al-1) \log\pr{\frac {\si^2}{\si^2 -1}} + 2 \log \la - C \brac{ \pr{M^{\ga} R^2 }^{\be} + \eta R}}{3 e^{C_3} \log\pr{\frac{\si r_2}{r_1}}} \\
&\le \frac{\log\brac{\frac{h(r_3)}{h(r_2)}} + (2\al-2 + C)\log\pr{\frac{r_3}{\si r_2}}  + (\al-1) \log\pr{\frac{\si^2}{\si^2 -1}} - 2 \log \la + C \brac{  \pr{M^{\ga} R^2 }^{\be} + \eta R}}{\log\pr{\frac{r_3}{\si r_2}} }.
\end{align*}
Set $\rho = 3 e^{C_3} \log\pr{\frac{\si r_2}{r_1}}$ and $\tau = \log\pr{\frac{r_3}{\si r_2}} $, then simplify to get
\begin{align*}
\log h(r_2) 
&\le \frac{\rho}{\rho + \tau}  \log h(r_3) + \frac{\tau}{\rho + \tau} \log h(r_1) 
+ \frac{2\rho \tau}{\rho + \tau} \pr{C + \al} \\
&+ \al \log\pr{\frac{\si^2}{\si^2 -1}} - 2 \log \la + C \brac{ \pr{M^{\ga} R^2 }^{\be} + \eta R} \\
&\le \log h(r_3)^{\frac{\rho}{\rho + \tau}  } + \log h(r_1)^{\frac{\tau}{\rho + \tau}}
+ \al\brac{C + 2 \tau + \log\pr{\frac{\si^2}{\si^2 -1}}} 
+ 2 C \tau - 2 \log \la + C  \eta R,
\end{align*}
where we have used \eqref{alADefn}.
Exponentiating then shows that
\begin{align*}
h(r_2)
&\le \exp\set{ \pr{\al + C}\brac{C + 2  \log\pr{\frac{r_3}{\si r_2}} + \log\pr{\frac{\si^2}{\si^2 -1}}} - 2 \log \la + C  \eta R} 
h(r_1)^{\kappa} 
h(r_3)^{1 - \kappa},
\end{align*}
where $\disp \kappa = \frac{ \log r_3 - \log\pr{\si r_2}}{\log r_3 - \log\pr{\si r_2} + 3 e^{C_3} \log\pr{\si r_2} - 3 e^{C_3} \log r_1} $.
With {$\al = 2 + \pr{M^\ga R^2}^\be + K^2 R^2$, \eqref{alADefn} holds} and we reach the conclusion.
\end{proof}

\section{Proofs of Main Theorems}
\label{S:Proofs}

We now have everything we need to prove Theorems \ref{OofV1}, \ref{UCatIny}, \ref{OofV}, and \ref{UCatInyAWV}.
{The first three proofs follow from applications of the three-ball inequality in Proposition of \ref{threeBallsV} or Proposition \ref{threeBallsA}.}

{We begin with the proof of Theorem \ref{OofV1}, an order of vanishing estimate for solutions to $-\LP u + V u = 0$ when $V \in L^\iny$.}

\begin{proof}[Proof of Theorem \ref{OofV1}]
The boundedness condition \eqref{uBound} implies that $\norm{u}_{L^2(B_{10})} \le C_0 \abs{B_{10}}^{\frac 1 2}$. 
The normalization condition \eqref{uNorm} followed by an application of Proposition \ref{threeBallsV} with $\si = 2$, $r_1 =r$, $r_2 = 1$, and $R = r_3 = 10$ shows that
\begin{align*}
1
\le \norm{u}_{L^2(B_{1})} 
&\le \exp\set{ C \brac{{1 +} \pr{100 M}^{\frac{2}{3}}} } \norm{u}_{L^2(B_{r})} ^{\kappa} \norm{u}_{L^2(B_{10})} ^{1 - \kappa}  \\
&\le \exp\pr{ \tilde C M^{\frac{2}{3}} { + C} + \ln C_0 + \tfrac 1 2 \ln \abs{B_{10}}} \norm{u}_{L^2(B_{r})} ^{\kappa},
\end{align*}
where $\tilde C = 100^{\frac 2 3} C$ and $\disp \kappa^{-1} = \frac{\log 10 - \log r}{\log 5}$.
After rearranging, we see that
\begin{align*}
 \norm{u}_{L^2(B_{r})}
 &\ge \exp\brac{\frac{\log r - \log 10}{\log 5} \pr{\tilde C M^{\frac{2}{3}} { + C} + \ln C_0 + \tfrac 1 2 \ln \abs{B_{10}}}}
\end{align*}
If $r \le \tfrac 1 {10}$, then further simplifications lead to \eqref{OofVResult1}.
\end{proof}

{Next, we prove Theorem \ref{UCatIny}, a Landis-type result for solutions to $-\LP u + V u = 0$ in $\R^n$ when $V \in L^\iny$.}

\begin{proof}[Proof of Theorem \ref{UCatIny}]
Let $x_0 \in \R^n$ with $\abs{x_0} = R$.
Define the shifted functions $u_0(x) = u(x + x_0)$ and $V_0(x) = V(x + x_0)$ so that $\norm{V_0}_{L^\iny(B_1)} \le 1$ and $u_0$ solves $- \LP u_0 + V_0 u_0 = 0$ in $B_{3R}$.
The boundedness condition \eqref{uBound1} implies that $\norm{u_0}_{L^2(B_{3R})} \le \abs{B_{3R}}^{\frac 1 2} \exp\brac{C_0 \pr{4 R}^{\frac{4}{3}}}$, while the normalization condition \eqref{uNorm1} shows that $\norm{u_0}_{L^2\pr{B_{R+1}}} \ge 1$.
An application of Proposition \ref{threeBallsV} with $r_1 = 1$, $r_2 = R+1$, and $r_3 = 3R$ shows that
\begin{align*}
1 
\le \norm{u_0}_{L^2(B_{R+1})} 
&\le \exp\set{ C \brac{{ 1+ } (3R)^{\frac{4}{3}} }} \norm{u_0}_{L^2(B_{1})} ^{\kappa} \norm{u_0}_{L^2(B_{3R})} ^{1 - \kappa} \\
&\le \exp\pr{ \tilde C R^{\frac{4}{3}} { + C }} \norm{u_0}_{L^2(B_{1})} ^{\kappa} \pr{\abs{B_{3R}}^{\frac 1 2} \exp\brac{C_0 \pr{4 R}^{\frac{4}{3}}}}^{1 - \kappa} ,
\end{align*}
where $\tilde C = 3^{\frac 4 3}C$ and $\disp \kappa^{-1} = \frac{\log R + \log 3}{\log \pr{\frac{3}{2 + 2R^{-1}}}}$.
Simplifying this expression shows that
\begin{align*}
\norm{u_0}_{L^2(B_{1})}
&\ge \exp\brac{- \frac{\tilde C R^{\frac{4}{3}} + C_0 \pr{4 R}^{\frac{4}{3}} { + C} + \tfrac 1 2 \log \abs{B_{3R}}}{\kappa} } .
\end{align*}
Since $\kappa^{-1} \le \frac{2}{\log\pr{{\tfrac 9 8}}} \log R$ whenever $R \ge 3$, then \eqref{UCatInyResult} holds, as required.
\end{proof}

{Our third proof, Theorem \ref{OofV}, establishes an order of vanishing estimate for solutions to $-\di \pr{A \gr u} + W \cdot \gr u + V u = 0$, where $W \in L^\iny$ and $V \in L^p$ for any $p \ge n$.}

\begin{proof}[Proof of Theorem \ref{OofV}]
The boundedness condition \eqref{uBound} implies that $\norm{u}_{L^2(B_{10})} \le C_0 \abs{B_{10}}^{\frac 1 2}$. 
The normalization condition \eqref{uNorm} followed by an application of Proposition \ref{threeBallsA} with $\si = 2$, $r_1 =r$, $r_2 = 1$, and $R = r_3 = 10$ shows that
\begin{align*}
1
\le \norm{u}_{L^2(B_{1})} 
&\le \frac{e^{10C \eta}}{\la^2} \exp\set{ \brac{\pr{100 M^{\ga}}^\be + 100 K^2 + C}\brac{C + \log \pr{\tfrac{100} 3 }}  }
 \norm{u}_{L^2(B_{r})} ^{\kappa} \norm{u}_{L^2(B_{10})} ^{1 - \kappa} \\
 &\le \exp\brac{ \tilde C \pr{K^2 + M^{\frac{2p}{3p - 2n}} + \eta + 1}+ \ln C_0 + \tfrac 1 2 \ln \abs{B_{10}}} \norm{u}_{L^2(B_{r})} ^{\kappa} ,
\end{align*}
where $\tilde C(n, \la, p) > 0$ and $\disp \kappa^{-1} = 1 - \frac{3 e^{10 c_0 \eta + C_1} }{ \log 5}\log \pr{\frac r 2}$.
After rearranging, we see that
\begin{align*}
\norm{u}_{L^2(B_{r})}
&\ge \exp\set{\brac{ \log \pr{\frac r 2} - \frac{ \log 5}{3 e^{10 c_0 \eta + C_1} }} \tilde C_0 \pr{K^2 + M^{\frac{2p}{3p - 2n}} + \eta + 1}e^{10 c_0 \eta}} ,
\end{align*}
where $\tilde C_0(n, \la, p, C_0) > 0$.
Choose $r_0(n, \la, \eta, p) > 0$ so that $\disp \log \pr{\frac r 2} - \frac{ \log 5}{3 e^{10 c_0 \eta + C_1} } \gtrsim \log r$ whenever $r \le r_0$ and then \eqref{OofVResult} follows.
\end{proof}

{Finally, we come to the proof of Theorem \ref{UCatInyAWV}.
This theorem establishes a Landis-type result for solutions to $-\di \pr{A \gr u} + W \cdot \gr u + V u = 0$ in $\R^n$, where $W \in L^\iny$ and $V \in L^p$ for any $p \ge n$.
To prove this theorem, we need to assume that $\abs{\gr A}$ exhibits sufficient decay at infinity.
Accordingly, the proof requires an iterative argument with repeated applications of the three-ball inequality in Proposition \ref{threeBallsA}.
The first application of Proposition \ref{threeBallsA} gives a super-exponential rate of decay that comes from the contribution of the Lipschitz constant.
However, as we move away from the origin, the decay of $\abs{\gr A}$ reduces the contribution of the Lipschitz constant.
In fact, each subsequent application of the three-ball inequality gives a rate of decay that depends exponentially on the current point and super-exponentially on the starting point.
Therefore, to prove our desired rate of decay, we carry out an iteration scheme with a fixed starting point.
}

\begin{proof}[Proof of Theorem \ref{UCatInyAWV}]
We show that there exists a positive constants $\tilde c_1(n, \la, \eta)$ {and a distance} $\bar{R}_1(n, \la, \eta, \eps, p, C_0, \de)$ so that the following holds:
If $R_1 \ge \bar{R}_1$, then for any $j \in \N$ and any $x_j \in \R^n$ with $\abs{x_j} = R_1^{(1+\de)^j}$, it holds that
\begin{align*}
\norm{u}_{L^2\pr{B_{1}(x_j)}}
&\ge \exp\brac{- \pr{e^{\tilde c_1 R_1} +1} \abs{x_j}^{2\pr{1+\de}}}.
\end{align*}
We proceed by induction.

For some $R_1 \gg 1$ to be specified below, choose $x_1 \in \R^n$ with $\abs{x_1} = R_1$.
To apply our three-ball inequality, we need $A(0) = I$, so we first perform a change of variables.
Since $A(x_1)$ is symmetric and positive definite, then with $S_1 = A(x_1)^{\frac 1 2}$, define $A_1(x) = S_1^{-1} A(x_1 + S_1 x) S_1^{-1}$. 
Then $A_1$ satisfies \eqref{symm}, \eqref{Abound} and \eqref{ellip} with $\la^2$, \eqref{LipCond} with $\la^{-\frac 1 2} \eta$, and $A_1(0) = I$. 
Moreover, with $u_1(x) = u(x_1 + S_1 x)$, $W_1(x) = S_1^{-1} W(x_1 + S_1 x)$, $V_1(x) = V(x_1 + S_1 x)$, and $R = 3 \la^{-\frac 1 2} R_1$, it holds that
$$- \di \pr{A_1 \gr u_1} + W_1 \cdot \gr u_1 + V_1 u_1 = 0 \quad \text{ in } B_R,$$
where $\norm{V_1}_{L^p(B_{R})} \le \la^{- \frac 1 {2p}}$ and $\norm{W_1}_{L^\iny(B_{R})} \le \la^{-\frac 1 2}$.  
Since $\abs{x_1 + S_1 x} \le \frac 4 \la R_1$ in $B_R$, then the boundedness condition \eqref{uBound2} implies that 
\begin{equation}
\label{u1Bound}
\norm{u_1}_{L^2(B_{R})} 
\le \abs{B_{R}}^{\frac 1 2} \exp\brac{C_0 \pr{ \tfrac {4} \la}^{2} R_1^2}
\le \exp\brac{C_0 \pr{ \tfrac {5} \la}^{2} R_1^2},
\end{equation}
where we have assumed that $R_1$ is large enough for $\log \abs{B_{1}} + n \log \pr{\frac{3R_1} \la} \le 2C_0 \pr{ \frac {3} \la}^{2} R_1^2$ to hold.
With $r_2 = \la^{-\frac 1 2}\pr{ R_1+1}$, we have $B_1(0) \su \set{x_1 + S_1 x : x \in B_{r_2}} $, so the normalization condition described by \eqref{uNorm2} shows that $\norm{u_1}_{L^2\pr{B_{r_2}}} \ge {\la^{n/2}}$.
An application of Proposition \ref{threeBallsA} with $r_1 = \la^{\frac 1 2}$, $r_2 = \la^{-\frac 1 2} (R_1 +1)$, $\si = 2$, and $r_3 =R = 3 \la^{-\frac 1 2} R_1$ shows that
\begin{align*}
{\la^{n/2}}
&\le \norm{u_1}_{L^2\pr{B_{r_2}}} \\
&\le \frac{e^{{\bar C} R_1\eta }}{\la^4} \exp\set{ \brac{ \pr{\tfrac{3}{\la}}^2 R_1^2 + \pr{ \tfrac{3 }{\la^{{\nu}}} }^{2\be} R_1^{2\be}+ C}\brac{C + \log \pr{\tfrac 4 3} + 2 \log\pr{\tfrac 3 2 \pr{1 + R_1^{-1}}^{-1}}}} \\
&\times \norm{u_1}_{L^2\pr{B_{r_1}}}^{\kappa_1}  \exp\brac{C_0 \pr{ \tfrac {5} \la}^{2} R_1^2},
\end{align*}
where {$\bar C = \frac{3C} \la$, $\nu = \frac 1 2+ \frac 1 {4p-2n}$,} $\be = \frac{2p - n}{3p - 2n} \le 1$, $\disp \kappa_1^{-1} = 1 + 3 e^{{\tilde{c}_0} \eta R_1 + C_1} \frac{\log \pr{\frac 2 \la \pr{R_1 + 1}}}{\log\pr{\tfrac 3 2 \pr{1 + R_1^{-1}}^{-1}}}$ {with $\tilde{c}_0 = \frac{3 c_0}{\la}$}, and we have substituted the bound from \eqref{u1Bound}. 
If $R_1 \ge \eta$, then with $\tilde C(n, \la, p, C_0) > 0$, we see that
\begin{align*}
1
&\le \exp\pr{\tilde C R_1^2 } \norm{u_1}_{L^2\pr{B_{r_1}}}^{\kappa_1}
\end{align*}
or
\begin{align*}
\norm{u_1}_{L^2\pr{B_{r_1}}}
&\ge \exp\set{- \tilde C\brac{3 e^{{\tilde c_0} \eta R_1 + C_1} \frac{\log \pr{\frac 2 \la \pr{R_1 + 1}}}{\log\pr{\tfrac 3 2 \pr{1 + R_1^{-1}}^{-1}}} + 1 } R_1^2 } \\
&\ge \exp\set{-  e^{\tilde c_1 R_1}  {\frac{3 \tilde C e^{C_1} \log \pr{\frac{3}{\la}R_1}}{\log\pr{\tfrac 3 2 \pr{1 + R_1^{-1}}^{-1}}}} R_1^2 } ,
\end{align*}
where $\tilde c_1(n, \la, \eta) = {\tilde c_0} \eta$.
If we choose $R_1(n, \la, p, C_0, \de) \gg 1$ so that $\frac{3 \tilde C e^{C_1}  \log \pr{\frac{3}{\la}R_1}}{\log\pr{\tfrac 3 2 \pr{1 + R_1^{-1}}^{-1}}} \le R_1^{2 \de}$, then 
\begin{align*}
\norm{u}_{L^2\pr{B_{1}(x_1)}}
&\ge \exp\brac{-e^{\tilde c_1 R_1}  \abs{x_1}^{2\pr{1+\de}}},
\end{align*}
which implies the base case.

Define $\bar R_1$ to be the smallest number $R_1 > 0$ so that with the constants as above, the following conditions hold:
\begin{align}
& 2C_0 \pr{ \tfrac {3} \la}^{2} R_1^2 \ge \log \abs{B_{1}} + n \log \pr{\tfrac{3} \la R_1} 
\label{R1Cond*} \\
& {R}_1 \ge \max\set{\eta, \frac{12}{\la}, \brac{\frac{{\tilde{c}_0} \eta}{\log\pr{\frac{10}{3}}}}^{\frac 1{\eps - \de}}}
\label{R1Cond0} \\
&\frac{3 \tilde C e^{C_1} \log \pr{\frac{3}{\la}R_1}}{\log\pr{\frac 3 2 \pr{1 + R_1^{-1}}^{-1}  }} \le R_1^{2 \de} 
\label{R1Cond1} \\
&1 + \frac{10 e^{C_1} \log (\la^{-1}R_1)}{\log \pr{1 + \frac{\la}{4} R_{1}^{-\frac{\de}{1 + \de}}}}
\le R_1^{\frac 3 2 \de} 
\label{R1Cond2} \\
&\tilde C  \log\pr{\tfrac 3 \la R_{1}^{\de}} + \tilde C +1 
\le R_1^{\frac \de 2}.
\label{R1Cond3}
\end{align}
Note that conditions \eqref{R1Cond*}, \eqref{R1Cond0} (first part), and \eqref{R1Cond1} were used already in the base case.
We remark that $\bar{R}_1$ depends on $n$, $\la$, $\eta$, $\eps$, $p$, $C_0$, and $\de$.

We now carry out the inductive step.
Assume that $R_{k-1} \ge \bar{R}_1$ has been defined and that for any $x_{k-1} \in \R^n$ with $\abs{x_{k-1}} = R_{k-1}$, it holds that
\begin{align*}
\norm{u}_{L^2\pr{B_{1}(x_{k-1})}}
&\ge \exp\brac{- \pr{e^{\tilde c_1 R_1} + 1}\abs{x_{k-1}}^{2\pr{1+\de}}}.
\end{align*}
Set $R_k = R_{k-1}^{1 + \de}$ and choose $x_k \in \R^n$ with $\abs{x_k} = R_k$.
With $S_k = A(x_k)^{\frac 1 2}$, define $A_k(x) = S_k^{-1} A(x_k + S_k x) S_k^{-1}$ and observe that $A_k$ satisfies \eqref{symm}, \eqref{Abound} and \eqref{ellip} with $\la^2$, and $A_k(0) = I$. 
Moreover, with $u_k(x) = u(x_k + S_k x)$, $W_k(x) = S_k^{-1} W(x_k + S_k x)$, and $V_k(x) = V(x_k + S_k x)$, we have
$$- \di \pr{A_k \gr u_k} + W_k \cdot \gr u_k + V_k u_k = 0,$$
where $\norm{V_k}_{L^p} \le \la^{- \frac 1 {2p}}$ and $\norm{W_k}_{L^\iny} \le \la^{-\frac 1 2}$.

Define $\rho > 0$ as
$$\rho = \inf\set{r > 0 : \set{x_k + S_k x : x \in B_r(0)} \cap \del B_{R_{k-1}}(0) \ne \emptyset}.$$
The ellipticity condition shows that $\rho = \la^a \pr{R_k - R_{k-1}}$ for some $a \in\brac{-\frac 1 2, \frac 1 2}$.
Then set $R > 0$ to be
$$R = \sup \set{r > 0 : \set{x_k + S_k x : x \in B_r(0)} \cap B_{R_{k-1}/2}(0) = \emptyset}.$$
In this case, the ellipticity condition shows that $R = \la^a \pr{R_k - \frac 1 2 R_{k-1}}$ for some $a \in \brac{-\frac 1 2, \frac 1 2}$.
However, we may also deduce that $R - \rho \ge \frac 1 2 \sqrt \la R_{k-1}$.
Since $R \le \la^{-\frac 1 2} R_k$, then in $B_R$, we have $\abs{x_k + S_k x} \le R_k + \la^{-\frac 1 2} R \le 2 \la^{-1} R_k$.
Therefore, \eqref{LipCond} holds in $B_R$ with $2^{1 + \eps} \la^{-\frac 1 2} \eta R_{k-1}^{-1 - \eps}$.
The boundedness condition \eqref{uBound2} implies that 
\begin{equation}
\label{ukBound}
\norm{u_k}_{L^2(B_{R})} \le \abs{B_{R}}^{\frac 1 2} \exp\brac{C_0 \pr{\tfrac{2 }{ \la}}^{2} R_k^2}
\le \exp\brac{C_0 \pr{\tfrac{4 }{ \la}}^{2} R_k^2},
\end{equation}
where we have used \eqref{R1Cond*} and that $R_{k-1} \ge \bar{R}_1$.
On the other hand, the inductive hypothesis shows that 
\begin{align*}
\norm{u_k}_{L^2\pr{B_{\rho + \la^{-1/2}}}} 
&\ge {\la^{n/2}} \norm{u}_{L^2\pr{B_{1}(x_{k-1})}} 
\ge {\la^{n/2}} \exp\brac{- \pr{e^{\tilde c_1 R_1} +1} R_{k-1}^{2+2 \de}} \\
&= {\la^{n/2}} \exp\brac{- \pr{e^{\tilde c_1 R_1} +1} R_{k}^{2}}.
\end{align*}
An application of Proposition \ref{threeBallsA} with $r_1 = \la^{\frac 1 2}$, $r_2= \rho + \la^{-\frac 1 2}$,  $\si r_2 = \frac 1 2 \pr{R + \rho}$, and $r_3 =R$ shows that
\begin{align*}
&\exp\brac{- \pr{e^{\tilde c_1 R_1} +1} R_k^{2}}
\le {\la^{-n/2}} \norm{u_k}_{L^2\pr{B_{r_2}}} \\
&\le \frac{e^{{\bar{C}} \eta R_{k-1}^{\de - \eps}}}{\la^{4 + {\frac n 2}}} \exp\set{ \brac{ \la^{-2} R_k^2  + {\la^{-2 \be \nu}} R_k^{2\be} + C}\brac{C + 2  \log \pr{\frac{2R}{R + \rho}} + \log\pr{\frac{\si^2}{\si^2 -1}}} }  \\
&\times \norm{u_k}_{L^2\pr{B_{\la^{1/2}}}}^{\kappa_k}  \exp\brac{C_0 \pr{\tfrac{4 }{ \la}}^{2} R_k^2},
\end{align*}
where
$\disp \frac 1 {\kappa_k} = 1 + 3e^{{\tilde c_0} \eta R_{k-1}^{\de - \eps} + C_1} \frac{\log \pr{\frac{\rho + R}{2 \sqrt \la}}}{\log \pr{\frac{2R}{R + \rho}}}$
and we have used \eqref{ukBound}.
Since $\disp \frac{2R}{R + \rho} \le 2$, then $\disp \log\pr{\frac{2R}{R + \rho}} \le \log 2 \le 1$. 
As $\disp \si = 1 + \frac{R - \rho - 2 \la^{-\frac 1 2}}{2\pr{\rho + \la^{-\frac 1 2}}}$, then
\begin{align*}
\frac{\si^2}{\si^2 - 1} 
= 1 + \frac{1}{\si^2 - 1}
\le \frac{R - \la^{-\frac 1 2}}{R - \rho - 2 \la^{-\frac 1 2}}
\le \frac{ R_{k-1}^{\de}}{\frac 1 2 \la - 2 R_{k-1}^{-1}}
\le \tfrac 3 \la R_{k-1}^{\de},
\end{align*}
where we have used the second part of \eqref{R1Cond0} and that $R_{k-1} \ge \bar{R}_1$.
Using these bounds in the above expression then shows that
\begin{align*}
\exp\brac{- \pr{e^{\tilde c_1 R_1} +1} R_k^{2}}
&\le \exp\set{ \tilde C R_k^2 \brac{1 + \log\pr{\tfrac 3 \la R_{k-1}^{\de}} }}  \norm{u_k}_{L^2\pr{B_{\la^{1/2}}}}^{\kappa_k}
\end{align*}
where $\tilde C(n, \la, p, C_0) > 0$ is as above.
Since $\disp \frac{\rho + R}{2 \sqrt \la} \le \frac R {\sqrt \la} \le \frac {R_k}\la$ and $\disp \frac {2R}{R + \rho} = 1 + \frac{R - \rho}{R + \rho} \ge 1 + \frac{\la}{4}R_{k-1}^{-\de}$, then the third part of \eqref{R1Cond0} shows that \begin{align*}
\frac{1}{\kappa_k} 
&= 1 + \frac{ 3 e^{{\tilde c_0} \eta R_{k-1}^{\de - \eps} + C_1} \log \pr{\frac{\rho + R}{2 \sqrt \la}} }{\log \pr{\frac{2R}{R + \rho}}}
\le 1 + \frac{ 10 e^{C_1} \log \pr{\la^{-1} R_{k}} }{\log \pr{1 + \frac{\la}{4}R_{k}^{-\frac{\de}{1+\de}}}}
\le R_k^{\frac 3 2 \de},
\end{align*}
where the bound in \eqref{R1Cond2} was used in the last step.
Simplifying the above expression then shows that
\begin{align*}
\norm{u}_{L^2\pr{B_{1}(x_k)}}
\ge \norm{u_k}_{L^2\pr{B_{\la^{1/2}}}}
&\ge \exp\set{-  R_k^{2 + \frac{3}{2}\de} \brac{e^{\tilde c_1 R_1} + \tilde C  \log\pr{\tfrac 3 \la R_{k-1}^{\de}} + \tilde C +1 }} \\
&\ge \exp\brac{- \pr{e^{\tilde c_1 R_1} + 1} \abs{x_k}^{2 + 2\de} },
\end{align*}
where we have applied \eqref{R1Cond3} in the last step.
In particular, the inductive step has been shown.

If $\abs{x_0} \ge \bar R_1$, then there exists $m \in \N$ so that $\bar R_1^{(1 + \de)^{m-1}} \le \abs{x_0} < \bar R_1^{(1 + \de)^{m}}$.
That is, we may find $R_1 \in [\bar R_1, \bar R_1^{1 + \de}]$ so that with $R_2, \ldots, R_m$ defined recursively by $R_k = R_{k-1}^{1+\de}$, we get that $R_m = \abs{x_0}$.
The inductive arguments above show that
\begin{align*}
\norm{u}_{L^2\pr{B_{1}(x_0)}}
&\ge \exp\brac{-\pr{1 + e^{\tilde c_1 R_1}} \abs{x_0}^{2\pr{1 + \de}}}
\ge \exp\brac{-\pr{1 + e^{\tilde c_1 \bar R_1^{1 + \de}}} \abs{x_0}^{2\pr{1 + \de}} },
\end{align*}
and the conclusion described by \eqref{UCatInyResultII} follows.
\end{proof}


\end{document}